\theoremstyle{plain}
\newtheorem{theorem*}{Theorem}
\newtheorem{corollary*}[theorem*]{Corollary}
\newtheorem{theorem}{Theorem}[section]
\newtheorem{lemma}[theorem]{Lemma}
\newtheorem{proposition}[theorem]{Proposition}
\newtheorem{corollary}[theorem]{Corollary}
\theoremstyle{definition}
\newtheorem{remark}[theorem]{Remark}
\newtheorem{remarks}[theorem]{Remarks}
\newtheorem{examples}[theorem]{Examples}
\newcommand\bA{{\mathbb A}}
\newcommand\bG{{\mathbb G}}
\newcommand\bP{{\mathbb P}}
\newcommand\bV{{\mathbb V}}
\newcommand\cC{{\mathcal C}}
\newcommand\cL{{\mathcal L}}
\newcommand\cO{{\mathcal O}}
\newcommand\fm{\mathfrak{m}}
\newcommand\car{{\rm char}}
\newcommand\id{{\rm id}}
\newcommand\pr{{\rm pr}}
\newcommand\N{{\rm N}}
\newcommand\R{{\rm R}}
\newcommand\Alb{{\rm Alb}}
\newcommand\Frac{{\rm Frac}}
\newcommand\Gal{{\rm Gal}}
\newcommand\Ker{{\rm Ker}}
\newcommand\Lie{{\rm Lie}}
\newcommand\NS{{\rm NS}}
\newcommand\Pic{{\rm Pic}}
\newcommand\Spec{{\rm Spec}}
\title{Algebraic group actions on normal varieties}
\author{Michel Brion}
\date{}
\begin{document}

\maketitle

\begin{abstract}
Let $G$ be a connected algebraic $k$-group acting on a normal
$k$-variety, where $k$ is a field. We show that $X$ is covered
by open $G$-stable quasi-projective subvarieties; moreover, 
any such subvariety admits an equivariant embedding into the
projectivization of a $G$-linearized vector bundle on an abelian
variety, quotient of $G$. This generalizes a classical result
of Sumihiro for actions of smooth connected affine algebraic groups.
\end{abstract}


\section{Introduction and statement of the main results}
\label{sec:int}

Consider an algebraic $k$-group $G$ acting on a $k$-variety $X$, where 
$k$ is a field. If $X$ is normal and $G$ is smooth, connected and affine, 
then $X$ is covered by open $G$-stable quasi-projective subvarieties; 
moreover, any such variety admits a $G$-equivariant immersion in 
the projectivization of some finite-dimensional $G$-module. 
This fundamental result, due to Sumihiro (see 
\cite[Thm.~1, Lem.~8]{Sumihiro} and 
\cite[Thm.~2.5, Thm.~3.8]{Sumihiro-II}), has many applications.
For example, it yields that $X$ is covered by $G$-stable affine 
opens when $G$ is a split $k$-torus; this is the starting 
point of the classification of toric varieties (see \cite{CLS})
and more generally, of normal varieties with a torus action 
(see e.g. \cite{AHS, Langlois, LS}). 

Sumihiro's theorem does not extend directly to actions of arbitrary 
algebraic groups. For example, a non-trivial abelian variety $A$, 
acting on itself by translations, admits no equivariant embedding 
in the projectivization of a finite-dimensional $A$-module, since $A$ 
acts trivially on every such module. Also, an example of Hironaka (see 
\cite{Hironaka}) yields a smooth complete threefold equipped with an
involution $\sigma$ and which is not covered by $\sigma$-stable
quasi-projective opens. Yet a generalization of Sumihiro's 
theorem was obtained in \cite{Brion10} for actions of smooth connected 
algebraic groups over an algebraically closed field. The purpose 
of this article is to extend this result to an arbitrary field. 

More specifically, for any connected algebraic group $G$, we will prove:

\begin{theorem*}\label{thm:cover}
Every normal $G$-variety is covered by $G$-stable quasi-projective opens.
\end{theorem*}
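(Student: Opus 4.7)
The plan is to combine Chevalley's structure theorem for algebraic groups with Sumihiro's theorem for the affine part, and then to descend line bundles along the quotient by the affine part. I begin with two standard reductions. First, by Galois descent along $k^{\sep}/k$, any $G_{k^{\sep}}$-stable quasi-projective open cover of $X_{k^{\sep}}$ produces, after intersecting Galois-translates, a $G$-stable quasi-projective cover of $X$; so I may assume $k$ is separably closed, hence perfect. Second, replacing $G$ by its reduced subgroup $G_{\red}$ (smooth over the perfect field $k$, with the same underlying space and the same orbit decomposition on $X$), I may assume $G$ is smooth. Chevalley's theorem then yields an exact sequence $1 \to H \to G \to A \to 1$ with $H$ smooth, connected, affine, and $A$ an abelian variety.

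Next, Sumihiro's theorem applied to the $H$-action on $X$ furnishes, for each $x \in X$, an $H$-stable quasi-projective open $U \ni x$ carrying an $H$-linearized ample line bundle $L$. The natural candidate for a $G$-stable open neighborhood of $x$ is $V := \bigcup_{g \in G} g \cdot U$, the image of the action morphism $a\colon G \times U \to X$: it is open (as a union of opens), $G$-stable, and contains $x$. The entire problem reduces to showing that $V$ is quasi-projective, which I would do by exhibiting a $G$-linearized ample line bundle $\cM$ on $V$ whose restriction to $U$ is a positive tensor power of $L$.

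To construct $\cM$, I would consider the classifying map $G \to \Pic(U)$, $g \mapsto [g^* L]$. It is $H$-invariant (as $L$ is $H$-linearized), so factors through a morphism $A \to \Pic(U)$. Because $A$ is proper, this morphism lands in a very restricted locus; after replacing $L$ by a sufficiently high tensor power and using the theorem of the square together with rigidity of line bundles on abelian varieties, one should be able to arrange that the pullbacks $a^*L$ and $p_U^*L$ on $G \times U$ agree up to a line bundle pulled back from $A$ via $G \to A$. A cocycle-style descent argument along the $H$-torsor $G \to A$ then produces a line bundle on $V$ with a $G$-linearization, and ampleness follows since $V = G \cdot U$ and $\cM|_U$ is ample.

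The hardest step is the last: promoting an $H$-linearized line bundle on $U$ to a $G$-linearized one on $V$. The obstruction is controlled by a cohomology class on $A$ with coefficients in an abelian sheaf built from $\Pic(H)$ and the character group of $H$, and its vanishing (after a suitable modification of $L$) rests on the interplay between the projectivity of $A$ (with its well-understood Picard theory) and the affineness of $H$ (which makes $\rH^1(H, \bG_m)$ torsion). Compatibility with Galois descent is a secondary technicality that should come essentially for free from a functorial construction, closing the loop with the initial reduction to separably closed $k$.
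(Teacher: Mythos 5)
Your opening reductions already contain a fatal error: a separably closed field of characteristic $p$ need not be perfect (the separable closure of $\bF_p(t)$ contains no $p$-th root of $t$), so after passing to $k_s$ you cannot form $G_{\red}$ as a subgroup scheme, and you cannot pass further to $\bar k$ because normality of $X$ is not preserved under inseparable extensions --- this is precisely the difficulty that separates the arbitrary-field case from \cite{Brion10}. The paper's reduction to smooth $G$ instead goes through the quotient $X \to X/G_n$ by a Frobenius kernel (Lemmas \ref{lem:quot} and \ref{lem:equiv}), which stays over $k$ and preserves normality. Your Galois-descent step is also incomplete as stated: intersecting the Galois translates of the members of a quasi-projective open cover of $X_{k'}$ need not yield a cover, since the Galois conjugates of a given point may lie in different members.

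The more serious gap is the core construction. Having chosen an $H$-stable quasi-projective open $U$ with an $H$-linearized ample $L$, you propose to produce a $G$-linearized ample line bundle on $V = G\cdot U$ restricting to a power of $L$; but $V$ is not a fiber bundle over $A$ with fibre $U$ (the translates $g\cdot U$ overlap in complicated ways), so there is no $H$-torsor along which to run your descent, the restriction map $\Pic(V)\to\Pic(U)$ need not be surjective, and ``ampleness follows since $V = G\cdot U$ and $\cM|_U$ is ample'' is not a valid inference. The paper avoids extending anything from $U$: it takes $U$ merely affine open, lets $D$ be an effective Weil divisor supported on $(G\cdot U)\setminus U$, proves $D$ is Cartier via the Ramanujam--Samuel theorem (using smoothness of the action map and $X = G\cdot U$), and proves $mD$ is ample via the theorem of the square, which yields sections $s_g$ of $\cO_X(2D)$ with $X_{s_g} = g\cdot U\cap g^{-1}\cdot U$ affine and covering $X$ (Proposition \ref{prop:ray}). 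Chevalley--Raynaud's exact sequence and the $H$-linearization enter only in establishing the theorem of the square for $L^{\otimes m}$ (Proposition \ref{prop:ts}); your classifying-map-to-$\Pic$ idea is in fact much closer to the paper's proof of Theorem \ref{thm:model} than to its proof of Theorem \ref{thm:cover}.
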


\begin{theorem*}\label{thm:model} 
Every normal quasi-projective $G$-variety admits a $G$-equivariant 
immersion in the projectivization of a $G$-linearized vector bundle 
on an abelian variety, quotient of $G$ by a normal subgroup scheme.
\end{theorem*}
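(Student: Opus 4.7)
The plan is to reduce the statement to Sumihiro's theorem applied to the affine part of $G$, and then globalize via an associated-bundle construction over the abelian quotient. The starting point is the Chevalley--Rosenlicht--Raynaud structure theorem: there exists a normal closed affine subgroup scheme $N \subset G$ (not necessarily smooth if $k$ is imperfect) such that $A := G/N$ is an abelian variety. The $G$-linearized bundle $\cE$ demanded by the theorem will be built as an associated bundle $G \times^{N} V$ for a suitable finite-dimensional $N$-module $V$.

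The central step is to construct a $G$-equivariant morphism $\pi : X \to A$, where $G$ acts on $A$ through $G \to G/N$ and translations. My strategy is Picard-theoretic: choose an ample line bundle $L$ on $X$ (available by quasi-projectivity) and consider the morphism $\varphi_{L} : G \to \Pic$, $g \mapsto g^{*}L \otimes L^{-1}$. By the theorem of the square, $\varphi_{L}$ is a group homomorphism with image in the connected component $\Pic^{0}$, which is an abelian variety; since $N$ is affine it has no nontrivial maps to an abelian variety, so $\varphi_{L}$ factors through $A$. An orbit-map analysis based at a chosen point $x_{0} \in X$ (possibly after an extension of scalars, later descended by Galois arguments) then yields a $G$-equivariant morphism from $X$ into an abelian quotient $B$ of $A$; replacing $N$ by $\Ker(G \to B)$ and relabeling, we may assume $B = A$ and that $\pi$ is surjective (its image is necessarily a $G$-stable subset of the homogeneous $A$-space $A$).

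Given $\pi$, the fiber $X_{0} := \pi^{-1}(e_{A})$ is $N$-stable, normal, and quasi-projective, and $G$-equivariance identifies $X$ with the associated fiber bundle $G \times^{N} X_{0}$. Applying Sumihiro's theorem (in its extension to affine group schemes acting on normal quasi-projective varieties) to $N \curvearrowright X_{0}$ produces an $N$-equivariant locally closed immersion $X_{0} \hookrightarrow \bP(V)$ for some finite-dimensional $N$-module $V$. Setting $\cE := G \times^{N} V$, a $G$-linearized vector bundle on $A$, the associated-bundle construction then gives
\[
X \;\cong\; G \times^{N} X_{0} \;\hookrightarrow\; G \times^{N} \bP(V) \;=\; \bP(\cE),
\]
which is the required $G$-equivariant immersion.

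The main obstacle is the construction of $\pi : X \to A$ in sufficient generality. Because $X$ is only quasi-projective, neither a genuine Albanese morphism nor a proper Picard scheme is directly available; the Picard-theoretic argument sketched above seems to require either passing to a normal $G$-equivariant compactification of $X$ (which must be threaded together with the proof of Theorem~1) or working with suitable relative Picard functors built from the covering by quasi-projective opens. Secondary technical issues arise from possibly non-smooth $N$ in positive characteristic, which necessitates a careful extension of Sumihiro's theorem to connected affine group schemes; this is routine in spirit but must be checked, and is presumably why the statement speaks of \emph{normal subgroup schemes} rather than smooth normal subgroups.
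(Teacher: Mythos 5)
Your overall architecture agrees with the paper's: reduce to a smooth $G$ (via Frobenius kernels), pick the affine normal subgroup $N$ with $A=G/N$ abelian, produce a $G$-equivariant morphism $\pi:X\to A'$ to an abelian quotient, identify $X$ with $G\times^{N'}X_0$ for $N'=\Ker(G\to A')$, apply the Sumihiro-type result for affine group schemes to the fiber, and embed into $\bP(G\times^{N'}V)$. That last chain of steps is essentially the paper's Corollary \ref{cor:GH} plus Lemma \ref{lem:ass}, and is fine.

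The genuine gap is the construction of $\pi$, which you yourself flag as ``the main obstacle'' but never resolve. Your proposed mechanism --- the polarization morphism $g\mapsto g^*L\otimes L^{-1}$ from $G$ to $\Pic_X$ --- cannot work as stated, for two reasons. First, $X$ is only quasi-projective, so $\Pic_X$ is not representable and its identity component is not an abelian variety; there is no target to factor through. Second, and more fundamentally, this map goes \emph{out of $G$}, whereas what is needed is a morphism \emph{out of $X$}; a homomorphism $A\to\Pic_X^0$ gives no map $X\to A$, and the ``orbit-map analysis based at $x_0$'' is only defined on the orbit of $x_0$, not on all of $X$. The paper's resolution is the key idea you are missing: after $H$-linearizing a power of $L$ (Lemma \ref{lem:lin}), the bundle $L_G=\alpha^*(L)\otimes\pr_X^*(L)^{-1}$ on $G\times X$ descends along the $H$-torsor $G\times X\to A\times X$ to a bundle $L_A$, and one then varies the \emph{second} factor: $\varphi:X\to\Pic_A$, $x\mapsto(\id_A\times x)^*(L_A)$. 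Here $\Pic_A$ \emph{is} representable since $A$ is proper, $\varphi$ is shown to be $G$-equivariant for the translation action, and ampleness of each $\varphi(x)$ (proved via affineness of orbit maps) forces $\varphi$ to land in a single $\hat A$-coset, isomorphic to $A/K(M)$; composing with multiplication by $n$ and averaging Galois conjugates descends this to $k$. Note also that the theorem of the square for $L^{\otimes m}$ is itself a nontrivial input here (Proposition \ref{prop:ts}, proved by the same descent to $A\times X$ plus the theorem of the cube), not something you can invoke for free on a non-proper $X$; and the case where $X$ is not geometrically irreducible requires the separate reduction of Example \ref{ex:fp}~(iv). Without some version of the $L_A$ construction, your proof does not get off the ground.
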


See the beginning of \S \ref{subsec:fp} for unexplained 
notation and conventions. Theorem \ref{thm:cover} is proved in
\S \ref{subsec:cover}, and Theorem \ref{thm:model} in \S 
\ref{subsec:model}. 

Theorem \ref{thm:cover} also follows from a result of Olivier Benoist
asserting that every normal variety contains finitely many maximal
quasi-projective open subvarieties (see \cite[Thm.~9]{Benoist}),
as pointed out by W\l odarczyk (see \cite[Thm.~D]{Wlodarczyk})
who had obtained an earlier version of the above result under more 
restrictive assumptions.

When $G$ is affine, any abelian variety quotient of $G$ is trivial, 
and hence the $G$-linearized vector bundles occuring in Theorem 
\ref{thm:model} are just the finite-dimensional $G$-modules. Thus,
Theorems \ref{thm:cover} and \ref{thm:model} give back Sumihiro's results.

Also, for a smooth connected algebraic group $G$ over a perfect field $k$, 
there exists a unique exact sequence of algebraic groups
$1 \to H \to G \to A \to 1$,
where $H$ is smooth, connected and affine, and $A$ is an abelian variety
(Chevalley's structure theorem, see \cite{Conrad,Milne} for modern
proofs). Then the $G$-linearized vector bundles occuring in 
Theorem \ref{thm:model} are exactly the homogeneous vector bundles 
$G \times^{H'} V$ on $G/H'$, where $H' \triangleleft G$ is a normal 
subgroup scheme containing $H$ (so that $G/H'$ is an abelian variety, 
quotient of $G/H = A$) and $V$ is a finite-dimensional $H'$-module.

The vector bundles on an abelian variety $A$ which are 
$G$-linearizable for some algebraic group $G$ with quotient $A$
are exactly the homogeneous, or translation-invariant, bundles; 
over an algebraically closed field, they have been classified by 
Miyanishi (see \cite[Thm.~2.3]{Miyanishi}) and Mukai (see
\cite[Thm.~4.17]{Mukai}).

We now present some applications of Theorems \ref{thm:cover} and 
\ref{thm:model}. First, as a straightforward consequence of Theorem
\ref{thm:model}, \emph{every normal quasi-projective $G$-variety $X$ 
admits an equivariant completion}, i.e., $X$ is isomorphic 
to a $G$-stable open of some complete $G$-variety. When $G$
is smooth and linear, this holds for any normal $G$-variety $X$
(not necessarily quasi-projective), by a result of Sumihiro
again; see \cite[Thm.~3]{Sumihiro}, \cite[Thm.~4.13]{Sumihiro-II}.
We do not know whether this result extends to an arbitrary
algebraic group $G$.

Another direct consequence of Theorems \ref{thm:cover} and
\ref{thm:model} refines a classical result of Weil:

\begin{corollary*}\label{cor:bir}
Let $X$ be a geometrically integral variety equipped with
a birational action of a smooth connected algebraic group $G$.
Then $X$ is $G$-birationally isomorphic to a normal projective
$G$-variety.
\end{corollary*}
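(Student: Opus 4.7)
\emph{Proof plan.} My plan is to regularize the birational action, apply the two Theorems above to a normal $G$-model, and then pass to the normalization of a projective closure.

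First, I would invoke Weil's classical theorem on regularization of birational actions of smooth connected algebraic groups, which is known to hold over an arbitrary base field: there exist a $G$-variety $Y$ with a regular $G$-action and a $G$-equivariant birational map $X \dashrightarrow Y$. Replacing $Y$ by its normalization $\tilde Y$ produces a normal $G$-variety that is still $G$-birational to $X$. The regular $G$-action lifts uniquely to $\tilde Y$: since $G$ is smooth and $\tilde Y$ is normal, the product $G \times \tilde Y$ is normal, so the composition $G \times \tilde Y \to G \times Y \to Y$ factors through the normalization morphism, producing the desired action.

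Next, I would apply Theorem \ref{thm:cover} to $\tilde Y$ to extract a $G$-stable quasi-projective open $U \subseteq \tilde Y$, which is automatically dense since $\tilde Y$ is irreducible (geometric integrality passes to $Y$ and $\tilde Y$ via the common function field). Theorem \ref{thm:model} then provides a $G$-equivariant locally closed immersion $U \hookrightarrow \bP(E)$, where $E$ is a $G$-linearized vector bundle on an abelian variety $A$, quotient of $G$. Since $A$ is projective and $\bP(E) \to A$ is projective, $\bP(E)$ is itself a projective $G$-variety. I would then take the scheme-theoretic closure $\overline U \subseteq \bP(E)$, obtaining a projective $G$-variety in which $U$ sits as a dense $G$-stable open, and finally pass to the normalization $Z \to \overline U$. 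By the same lifting argument as before, $Z$ carries a regular $G$-action, and since $Z \to \overline U$ is finite and birational, $Z$ is a normal projective $G$-variety containing an open $G$-equivariantly isomorphic to $U$, hence $G$-birational to $X$.

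The main obstacle I anticipate is formal rather than deep: one must check that Weil's regularization theorem is available in the stated generality (smooth connected $G$ over an arbitrary field, acting birationally on a merely geometrically integral $X$), and then carefully propagate the regular $G$-action through the normalization, the restriction to the quasi-projective open $U$, the closure inside $\bP(E)$, and a further normalization. The substantive content is already packed inside Theorems \ref{thm:cover} and \ref{thm:model}, so once those are in hand the corollary should follow by the chain of constructions above.
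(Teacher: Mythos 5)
Your proposal is correct and takes essentially the same route as the paper: Weil's regularization to a normal $G$-model, Theorem \ref{thm:cover} to extract a dense $G$-stable quasi-projective open, Theorem \ref{thm:model} to embed it equivariantly in the projective variety $\bP(E)$, then closure and normalization. The paper compresses exactly this chain into two sentences (citing Weil directly for the normal model), so your write-up just supplies the routine details it leaves implicit.
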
 

(Again, see the beginning of Subsection \ref{subsec:fp} for unexplained
notation and conventions). More specifically, Weil showed that
$X$ is $G$-birationally isomorphic to a normal $G$-variety $X'$
(see \cite[Thm.~p.~355]{Weil}). That $X'$ may be chosen projective 
follows by combining Theorems \ref{thm:cover} and \ref{thm:model}.
If $\car(k) = 0$, then we may assume in addition that $X'$ is
\emph{smooth} by using equivariant resolution of singularities
(see \cite[Thm.~3.36, Prop.~3.9.1]{Kollar}). The existence of 
such smooth projective ``models'' fails over any 
imperfect field (see e.g. \cite[Rem.~5.2.3]{Brion17}); one may 
ask whether \emph{regular} projective models exist 
in that setting.

Finally, like in \cite{Brion10}, we may reformulate Theorem \ref{thm:model} 
in terms of the Albanese variety, if $X$ is geometrically integral:
then $X$ admits a universal morphism to a torsor $\Alb^1(X)$ under 
an abelian variety $\Alb^0(X)$ (this is proved in \cite[Thm.~5]{Serre} 
when $k$ is algebraically closed, and extended to an arbitrary 
field $k$ in \cite[App.~A]{Wittenberg}).

\begin{corollary*}\label{cor:alb}
Let $X$ be a geometrically integral variety equipped with
an action $\alpha$ of a smooth connected algebraic group $G$. 
Then $\alpha$ induces an action $\Alb^1(\alpha)$ of $\Alb^0(G)$ 
on $\Alb^1(X)$. If $X$ is normal and quasi-projective, and $\alpha$ 
is almost faithful, then $\Alb^1(\alpha)$ is almost faithful as well.
\end{corollary*}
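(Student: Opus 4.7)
The plan is to produce $\Alb^1(\alpha)$ by functoriality and then derive almost faithfulness from Theorem \ref{thm:model} by studying the anti-affine radical of the normal subgroup scheme arising there.

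For the first assertion, I apply $\Alb^1$ to the morphism $\alpha: G \times X \to X$. Using $e_G \in G(k)$ to identify $\Alb^1(G) \cong \Alb^0(G)$ and the product formula $\Alb^1(G \times X) \cong \Alb^0(G) \times \Alb^1(X)$, functoriality yields $\Alb^1(\alpha): \Alb^0(G) \times \Alb^1(X) \to \Alb^1(X)$; the associativity and unit axioms follow by applying $\Alb^1$ to the analogous commutative diagrams satisfied by $\alpha$.

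For almost faithfulness, set $K := \ker \Alb^1(\alpha)$. Since the Albanese is compatible with ground-field extension for geometrically integral varieties, and finiteness of closed subgroup schemes descends under faithfully flat base change, I reduce to $k = \bar k$. Theorem \ref{thm:model} then gives a $G$-equivariant immersion $X \hookrightarrow \bP(E)$, where $E$ is a $G$-linearized vector bundle on an abelian variety $A = G/N$, and the projection $\bP(E) \to A$ is $G$-equivariant for the translation action of $G$ on $A$ factoring through $G \to A$. Since $G_{\aff}$ is connected affine while $A$ is abelian, $G_{\aff} \subseteq N$; hence $\Alb^0(G) \to A$ is surjective with kernel $\bar N := N/G_{\aff}$. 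Universality of the Albanese applied to the equivariant morphism $X \to A$ produces an $\Alb^0(G)$-equivariant morphism $\Alb^1(X) \to A$, and any element of $K$, acting trivially on $\Alb^1(X)$, must therefore have trivial image in $A$; thus $K \subseteq \bar N$.

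The crux is to show $\bar N$ is finite. Since $N$ is the kernel of $G \to A$, the $G$-linearization of $E$ restricts to a fiberwise linear action of $N$, giving representations $\rho_a: N \to \GL(E_a)$ for each $a \in A$. Let $H$ be the anti-affine radical of the reduced identity component $N^0_{\red}$; as a smooth connected anti-affine group, $H$ admits no non-trivial homomorphism to an affine group, so $\rho_a|_H$ is trivial for every $a$, and hence $H$ acts trivially on $\bP(E)$ and a fortiori on $X$. By almost faithfulness of $\alpha$, $H$ lies in the finite kernel of $\alpha$; being smooth, connected and finite, it is trivial. So $N^0_{\red}$ is affine, and since $G_{\aff}$ is the maximal connected affine subgroup of $G$ and is contained in $N^0_{\red}$, the two coincide, whence $\bar N$ has trivial reduced identity component and is finite. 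The hardest part is this concluding step: the anti-affine radical is precisely the algebraic instrument needed to convert the projective/linear structure on the fibers of $E$, together with almost faithfulness of $\alpha$, into the affineness of $N$ modulo $G_{\aff}$.
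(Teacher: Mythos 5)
Your construction of $\Alb^1(\alpha)$ matches the paper's (functoriality of the Albanese together with its compatibility with finite products of geometrically integral varieties), so the first assertion is fine. For almost faithfulness you take a genuinely different route. The paper does not work from the bare statement of Theorem \ref{thm:model}: it uses the fact, extracted from that theorem's \emph{proof}, that there is a $G$-equivariant morphism $\psi : X \to G/H'$ with $H'$ \emph{affine} and $G/H'$ an abelian variety. Since $\Alb^0(G) = G/H$ with $H$ the smallest affine subgroup scheme having abelian quotient, $H'/H$ is automatically finite (affine and proper), and the kernel of $\Alb^1(\alpha)$ lands in $H'/H$ by Albanese universality applied to $\psi$. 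You instead recover the needed affineness from the statement alone, by showing that the largest anti-affine subgroup of $N^0_{\red}$ acts trivially on the fibers of the linearized bundle, hence on $X$, hence is trivial by almost faithfulness. That is a correct and rather elegant argument \emph{over a perfect field}, and it buys independence from the internals of the proof of Theorem \ref{thm:model}; but it forces you to pass to $\bar k$, and that is where the gap is.

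The reduction to $k = \bar k$ rests on the assertion that the Albanese torsor commutes with arbitrary ground field extension. The paper only establishes this for \emph{separable} algebraic extensions; over an imperfect field $k$ (the situation this paper is specifically written to handle), $\bar k/k$ is inseparable and the comparison $\Alb^1(X_{\bar k}) \cong \Alb^1(X)_{\bar k}$ is neither proved nor obvious. Without it, knowing that $\Alb^0(G_{\bar k})$ acts almost faithfully on $\Alb^1(X_{\bar k})$ does not let you conclude that $\Alb^0(G)_{\bar k}$ acts almost faithfully on $\Alb^1(X)_{\bar k}$, which is what the descent of finiteness of $\Ker \Alb^1(\alpha)$ actually requires. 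Nor can your crux be run directly over $k$ or $k_s$ in characteristic $p$: there $N_{\red}$ need not be a subgroup scheme and Chevalley's smooth connected $G_{\aff}$ is unavailable (the paper has to use Raynaud's version with a possibly non-smooth affine $H$ precisely for this reason). In characteristic zero, or over a perfect base field, your proof is complete; over an imperfect field the base-change step needs to be either justified or circumvented, e.g.\ by using the affineness of $H'$ as the paper does.
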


This result is proved in \S \ref{subsec:cor}. For a faithful action 
$\alpha$, it may happen that $\Alb^1(\alpha)$ is not faithful, see 
Remark \ref{rem:pr3}.

The proofs of Theorems \ref{thm:cover} and \ref{thm:model} follow the 
same lines as those of the corresponding results of \cite{Brion10}, 
which are based in turn on the classical proof of the projectivity 
of abelian varieties, and its generalization by Raynaud to the 
quasi-projectivity of torsors (see \cite{Raynaud} and also 
\cite[Chap.~6]{BLR}). But many arguments of \cite{Brion10} require 
substantial modifications, since the irreducibility and normality
assumptions on $X$ are not invariant under field extensions. 

Also, note that non-smooth subgroup schemes occur inevitably in 
Theorem \ref{thm:model} when $\car(k) = p > 0$: 
for example, the above subgroup schemes $H' \subset G$ obtained 
as pull-backs of $p$-torsion subgroup schemes of $A$ (see 
Remark \ref{rem:fin} (ii) for additional examples). Thus, we devote 
a large part of this article to developing techniques of algebraic 
transformation groups over an arbitrary field. 

Along the way, we obtain a generalization of Sumihiro's theorem 
in another direction: any normal quasi-projective variety equipped 
with an action of an affine algebraic group $G$ - not necessarily
smooth or connected - admits an equivariant immersion in the 
projectivization of a finite-dimensional $G$-module (see Corollary
\ref{cor:GH}).

This article is the third in a series devoted to the structure 
and actions of algebraic groups over an arbitrary field (see
\cite{Brion15, Brion17}). It replaces part of the unsubmitted
preprint \cite{Brion14}; the remaining part, dealing with
semi-normal varieties, will be developed elsewhere.

\section{Preliminaries}
\label{sec:prel}

\subsection{Functorial properties of algebraic group actions}
\label{subsec:fp}

Throughout this article, we fix a field $k$ with algebraic 
closure $\bar{k}$ and separable closure $k_s \subset \bar{k}$.
Unless otherwise specified, we consider separated schemes over 
$k$; morphisms and products of schemes are understood 
to be over $k$. The structure map of such a scheme $X$ is denoted 
by $q = q_X : X \to \Spec(k)$, and the scheme obtained by base change
under a field extension $k'/k$ is denoted by $X \otimes_k k'$, or just 
by $X_{k'}$ if this yields no confusion. A \emph{variety} is an integral 
scheme of finite type over $k$.

Recall that a \emph{group scheme} is a scheme $G$ equipped
with morphisms 
\[ \mu = \mu_G : G \times G \longrightarrow G, \quad  
\iota = \iota_G: G \longrightarrow G \] and
with a $k$-rational point $e = e_G \in G(k)$ such that for
any scheme $S$, the set of $S$-points $G(S)$ is a group with
multiplication map $\mu(S)$, inverse map $\iota(S)$ and 
neutral element $e \circ q_S \in G(S)$. 
This is equivalent to the commutativity of the following diagrams:
\[
\xymatrix{
G \times G \times G \ar[r]^-{\mu \times \id}\ar[d]_{\id \times \mu} 
& G \times G \ar[d]^{\mu} \\
G \times G \ar[r]^-{\mu} & G \\}
\]
(i.e., $\mu$ is associative),
\[
\xymatrix{
G \ar[r]^-{e \circ q \times \id} \ar[dr]_{\id} & 
G \times G \ar[d]^{\mu} & 
\ar[l]_-{\id \times e \circ q} \ar[dl]^{\id} G \\
& G \\
}
\]
(i.e., $e$ is the neutral element), and
\[
\xymatrix{
G \ar[r]^-{\id \times \iota} \ar[dr]_{e \circ q} & 
G \times G \ar[d]^{\mu} & 
\ar[l]_-{\iota \times \id} \ar[dl]^{e \circ q} G \\
& G \\
}
\]
(i.e., $\iota$ is the inverse map). We denote for simplicity
$\mu(g,h)$ by $g h$, and $\iota(g)$ by $g^{-1}$.
An \emph{algebraic group} is a group scheme of finite type
over $k$.

Given a group scheme $G$, a \emph{$G$-scheme} is a scheme $X$
equipped with a \emph{$G$-action}, i.e., a morphism
$\alpha : G \times X \to X$ such that for any scheme $S$, the map
$\alpha(S)$ defines an action of the group $G(S)$ on the set $X(S)$.
Equivalently, the following diagrams are commutative:
\[
\xymatrix{
G \times G \times X \ar[r]^-{\mu \times \id_X}\ar[d]_{\id_G \times \alpha} 
& G \times X \ar[d]^{\alpha} \\
G \times X \ar[r]^-{\alpha} & X \\}
\]
(i.e., $\alpha$ is ``associative''), and 
\[
\xymatrixcolsep{3pc}\xymatrix{
X  \ar[r]^-{e \circ q \times \id_X}\ar[dr]_{\id_X}  & 
G \times X \ar[d]^{\alpha} \\
&  X \\
}
\]
(i.e., the neutral element acts via the identity). We denote for
simplicity $\alpha(g,x)$ by $g \cdot x$. 

The \emph{kernel} of $\alpha$ is the group functor that assigns 
to any scheme $S$, the subgroup of $G(S)$ consisting of those 
$g \in G(S)$ that act trivially on the $S$-scheme $X \times S$ 
(i.e., $g$ acts trivially on the set $X(S')$ for any $S$-scheme $S'$). 
By \cite[II.1.3.6]{DG}, this group functor is represented by 
a closed normal subgroup scheme $\Ker(\alpha) \triangleleft G$.
Also, note that the formation of $\Ker(\alpha)$ commutes with
base change by field extensions. We say that $\alpha$ is 
\emph{faithful} (resp.~\emph{almost faithful}) if its kernel is trivial
(resp.~finite); then $\alpha_{k'}$ is faithful (resp.~almost faithful)
for any field extension $k'/k$.

A \emph{morphism of group schemes} is a morphism $f: G \to H$, 
where of course $G$, $H$ are group schemes, and 
$f(S) : G(S) \to H(S)$ is a group homomorphism for any scheme $S$.
Equivalently, the diagram
\[
\xymatrix{
G \times G \ar[r]^-{\mu_G}\ar[d]_{f \times f} 
& G \ar[d]^{f} \\
H \times H \ar[r]^-{\mu_H} & H \\}
\]
commutes.

Consider a morphism of group schemes $f: G \to H$, 
a scheme $X$ equipped with a $G$-action $\alpha$, a scheme $Y$ 
equipped with an $H$-action $\beta$ and a morphism (of schemes)
$\varphi : X \to Y$. We say that 
\emph{$\varphi$ is equivariant relative to $f$} if we have
$\varphi(g \cdot x) = f(g) \cdot \varphi(x)$ for any scheme
$S$ and any $g \in G(S)$, $x \in X(S)$. This amounts to the
commutativity of the diagram
\[
\xymatrix{
G \times X \ar[r]^-{\alpha}\ar[d]_{f \times \varphi} 
& X \ar[d]^{\varphi} \\
H \times Y \ar[r]^-{\beta} & Y. \\}
\]

We now recall analogues of some of these notions in birational geometry.
A \emph{birational action} of a smooth connected algebraic group 
$G$ on a variety $X$ is a rational map 
\[ \alpha : G \times X \dasharrow X \]
which satisfies the ``associativity'' condition on some open dense
subvariety of $G \times G \times X$, and such that the rational map
\[ G \times X \dasharrow G \times X, \quad 
(g,x) \longmapsto (g, \alpha(g,x)) \]
is birational as well. We say that two varieties $X$, $Y$ equipped
with birational actions $\alpha$, $\beta$ of $G$ are 
$G$-\emph{birationally isomorphic} if there exists a birational
map $\varphi : X \dasharrow Y$ which satisfies the equivariance
condition on some open dense subvariety of $G \times X$.

Returning to the setting of actions of group schemes, 
recall that a vector bundle $\pi : E \to X$ on a 
$G$-scheme $X$ is said to be $G$-\emph{linearized} if $E$
is equipped with an action of $G \times \bG_m$ such that
$\pi$ is equivariant relative to the first projection 
$\pr_G : G \times \bG_m \to G$, and $\bG_m$ acts on $E$ by 
multiplication on fibers. For a line bundle $L$, this is equivalent 
to the corresponding invertible sheaf $\cL$ (consisting of local 
sections of the dual line bundle) being $G$-linearized in the sense 
of \cite[Def.~1.6]{MFK}.

Next, we present some functorial properties of these notions, which 
follow readily from their definitions via commutative diagrams. 
Denote by $Sch_k$ the category of schemes over $k$. Let $\cC$ be 
a full subcategory of $Sch_k$ such that $\Spec(k) \in \cC$ and 
$X  \times Y \in \cC$ for all $X,Y \in \cC$. Let 
$F : \cC \to Sch_{k'}$ be a (covariant) functor, 
where $k'$ is a field. Following \cite[II.1.1.5]{DG}, we say that 
\emph{$F$ commutes with finite products} if 
$F(\Spec(k)) = \Spec(k')$ and the map
\[ F(\pr_X) \times F(\pr_Y) : 
F(X \times Y) \longrightarrow F(X) \times F(Y) \]
is an isomorphism for all $X,Y \in \cC$, where 
$\pr_X: X \times Y \to X$, $\pr_Y : X \times Y \to Y$ denote
the projections. 

Under these assumptions, $F(G)$ is equipped with a $k'$-group
scheme structure for any $k$-group scheme $G \in \cC$. Moreover, 
for any $G$-scheme $X \in \cC$, we obtain an $F(G)$-scheme
structure on $F(X)$. If $f : G \to H$ is a morphism of
$k$-group schemes and $G,H \in \cC$, then the morphism
$F(f): F(G) \to F(H)$ is a morphism of $k'$-group schemes.
If in addition $Y \in \cC$ is an $H$-scheme and 
$\varphi : X \to Y$ an equivariant morphism relative to $f$, 
then the morphism $F(\varphi) : F(X) \to F(Y)$ is equivariant 
relative to $F(f)$. 

Also, if $F_1 : \cC \to Sch_{k_1}$, $F_2 : \cC \to Sch_{k_2}$
are two functors commuting with finite products, and 
$T : F_1 \to F_2$ is a morphism of functors, 
then $T$ induces morphisms of group schemes 
$T(G) : F_1(G) \to F_2(G)$, and equivariant morphisms 
$T(X) : F_1(X) \to F_2(X)$ relative to $T(G)$, for all $G,X$ as above.

Consider again a functor $F : \cC \to Sch_{k'}$ commuting
with finite products. We say that 
\emph{$F$ preserves line bundles} if for any line bundle
$\pi : L \to X$, where $X \in \cC$, we have that $L \in \cC$
and $F(\pi) : F(L) \to F(X)$ is a line bundle; in addition,
we assume that $\bG_{m,k} \in \cC$ and $F(\bG_{m,k}) \cong \bG_{m,k'}$ 
compatibly with the action of $\bG_{m,k}$ on $L$ by multiplication
on fibers, and the induced action of $F(\bG_{m,k})$ on $F(L)$.
Under these assumptions, for any $G$-scheme $X \in \cC$ and
any $G$-linearized line bundle $L$ on $X$, the line bundle
$F(L)$ on $F(X)$ is equipped with an $F(G)$-linearization.

\begin{examples}\label{ex:fp}
(i) Let $h : k \to k'$ be a homomorphism of fields. Then the
base change functor 
\[ F : Sch_k \longrightarrow Sch_{k'}, 
\quad X \longmapsto X \otimes_h k' := X \times_{\Spec(k)} \Spec(k') \] 
commutes with finite products and preserves line bundles. Also,
assigning to a $k$-scheme $X$ the projection 
\[ \pr_X : X \otimes_h k' \longrightarrow X \]
yields a morphism of functors from $F$ to the identity of 
$Sch_k$. As a consequence, $G \otimes_h k'$ is a 
$k'$-group scheme for any $k$-group scheme $G$, and $\pr_G$ is a 
morphism of group schemes. Moreover, for any $G$-scheme $X$, 
the scheme $X \otimes_h k'$ comes with an action of
$G \otimes_h k'$ such that $\pr_X$ is equivariant; also,
every $G$-linearized line bundle $L$ on $X$ yields a
$G \otimes_h k'$-linearized line bundle $L \otimes_h k'$ on
$X \otimes_h k'$. This applies for instance to the Frobenius 
twist $X \mapsto X^{(p)}$ in characteristic $p > 0$ 
(see Subsection \ref{subsec:ifm} for details).

\medskip

\noindent
(ii) Let $k'/k$ be a finite extension of fields, and $X'$ 
a quasi-projective scheme over $k'$. Then the Weil restriction 
$\R_{k'/k}(X')$ is a quasi-projective scheme over $k$ (see 
\cite[7.6]{BLR} and \cite[A.5]{CGP} for details on Weil restriction). 
The assignment $X' \mapsto \R_{k'/k}(X')$ extends to a functor 
\[ \R_{k'/k} : Sch^{qp}_{k'} \longrightarrow Sch_k^{qp}, \] 
where $Sch_k^{qp}$ denotes the full subcategory of $Sch_k$ 
with objects being the quasi-projective schemes.
By \cite[A.5.2]{CGP}, $\R_{k'/k}$ commutes with finite products, 
and hence so does the functor
\[ F : Sch_k^{qp} \longrightarrow Sch_k^{qp}, \quad
X \longmapsto \R_{k'/k}(X_{k'}). \]
Since every algebraic group $G$ is quasi-projective (see 
e.g.~\cite[A.3.5]{CGP}), we see that $\R_{k'/k}(G_{k'})$ is equipped 
with a structure of $k$-group scheme. Moreover, for any 
quasi-projective $G$-scheme $X$, we obtain an 
$\R_{k'/k}(G_{k'})$-scheme structure on $\R_{k'/k}(X_{k'})$.
The adjunction morphism
\[ j_X : X \longrightarrow \R_{k'/k}(X_{k'}) = F(X) \] 
is a closed immersion by \cite[A.5.7]{CGP}, and extends to 
a morphism of functors from the identity of $Sch_k^{qp}$ to
the endofunctor $F$. As a consequence, for any quasi-projective
$G$-scheme $X$, the morphism $j_X$ is equivariant relative
to $j_G$.

Note that $F$ does not preserve line bundles (unless $k'= k$), 
since the algebraic $k'$-group $\R_{k'/k}(\bG_{m,k'})$ has 
dimension $[k':k]$. 

\medskip

\noindent
(iii) Let $X$ be a scheme, locally of finite type over $k$.
Then there exists an \'etale scheme $\pi_0(X)$ and a morphism
\[ \gamma = \gamma_X : X \longrightarrow \pi_0(X), \]
such that every morphism $f :X \to Y$, where $Y$ is \'etale,
factors uniquely through $\gamma$. Moreover, $\gamma$ is faithfully 
flat, and its fibers are exactly the connected components of $X$. 
The formation of $\gamma$ commutes with field extensions
and finite products (see \cite[I.4.6]{DG} for these results).
In particular, $X$ is connected if and only if $\pi_0(X) = \Spec(K)$
for some finite separable field extension $K/k$. Also, $X$ is 
geometrically connected if and only if $\pi_0(X) = \Spec(k)$. 

As a well-known consequence, for any group scheme $G$, 
locally of finite type, we obtain a group scheme structure on 
the \'etale scheme $\pi_0(G)$ such that $\gamma_G$ is a morphism 
of group schemes; its kernel is the neutral component $G^0$. 
Moreover, any action of $G$ on a scheme of finite type $X$ 
yields an action of $\pi_0(G)$ on $\pi_0(X)$ such that $\gamma_X$ 
is equivariant relative to $\gamma_G$. In particular, 
every connected component of $X$ is stable under $G^0$.

\medskip

\noindent
(iv) Consider a connected scheme of finite type $X$,
and the morphism $\gamma_X : X \to \Spec(K)$ as in (iii).
Note that the degree $[K:k]$ is the number of geometrically connected
components of $X$. Also, we may view $X$ as a $K$-scheme; then it
is geometrically connected.

Given a $k$-scheme $Y$, the map 
\[ \iota_{X,Y} := \id_X \times \pr_Y : X \times_K Y_K 
\longrightarrow X \times_k Y \]
is an isomorphism of $K$-schemes, where $X \times_k Y$ is viewed as
a $K$-scheme via $\gamma_X \circ \pr_X$. Indeed, considering open affine
coverings of $X$ and $Y$, this boils down to the assertion that the map 
\[ R \otimes_k S \longrightarrow R \otimes_K (S \otimes_k K),
\quad r \otimes s \longmapsto r \otimes (s \otimes 1) \]
is an isomorphism of $K$-algebras for any $K$-algebra $R$ and
any $k$-algebra $S$.

Also, note that the projection $\pr_X : X_K \to X$
has a canonical section, namely, the adjunction map 
$\sigma_X : X \to X_K$. Indeed, considering an open affine
covering of $X$, this reduces to the fact that the inclusion map 
\[ R \longrightarrow R \otimes_k K, 
\quad r \longmapsto r \otimes 1 \]
has a retraction given by $r \otimes z \mapsto z r$.
Thus, $\sigma_X$ identifies the $K$-scheme $X$ with a connected 
component of $X_K$. 

For any $k$-scheme $Y$, the above map $\iota_{X,Y}$ is compatible 
with $\sigma_X$ in the sense that the diagram
\[
\xymatrixcolsep{4pc}\xymatrix{
X \times_K Y_K \ar[r]^-{\iota_{X,Y}} \ar[d]_{\sigma_X \times \id_Y} & 
X \times_k Y \ar[d]^{\sigma_{X \times_k Y}}\\
X_K \times_K Y_K  \ar[r]^-{\pr_X \times \id_{Y_K}}
& (X \times_k Y)_K \\
}
\]
commutes, with the horizontal maps being isomorphisms. Indeed, 
this follows from the identity 
$z r \otimes s \otimes 1 = r \otimes s \otimes z$
in $R \otimes_K (S \otimes_k K)$ for any $R$ and $S$ as above, and
any $z \in K$, $r \in R$, $s \in S$.

Given a morphism of $k$-schemes $f : X' \to X$, we may also view 
$X'$ as a $K$-scheme via the composition $X' \to X \to \Spec(K)$.
Then the diagram 
\[
\xymatrix{
X' \ar[r]^f \ar[d]_{\sigma_{X'}} & X \ar[d]^{\sigma_X} \\
X'_K \ar[r]^{f_K} & X_K \\
}
\]
commutes, as may be checked by a similar argument.

In particular, if $X$ is equipped with an action of an algebraic 
$k$-group $G$, then $G_K$ acts on the $K$-scheme $X$ through the 
morphism $\pr_G : G_K \to G$; moreover, $X$ is stable under the
induced action of $G_K$ on $X_K$, since the diagram
\[
\xymatrixcolsep{4pc}\xymatrix{
G_K \times_K X \ar[r]^-{\iota_{X,G}} \ar[d]_{\id_{G_K} \times \sigma_X} & 
G \times_k X \ar[r]^-{\alpha} \ar[d]_{\sigma_{G \times_k X}} 
& X \ar[d]^{\sigma_X} \\
G_K \times_K X_K  \ar[r]^-{\id_{G_K} \times \pr_X}
& (G \times_k X)_K \ar[r]^-{\alpha_K} & X_K \\
}
\]
commutes.

When $X$ is a normal $k$-variety, the above field $K$ is
the separable algebraic closure of $k$ in the function field 
$k(X)$. (Indeed, $K$ is a subfield of $k(X)$ as $\gamma_X$ is 
faithfully flat; hence $K \subset L$, where $L$ denotes the
separable algebraic closure of $k$ in $k(X)$. On the other hand,
$L \subset \cO(X)$ as $L \subset k(X)$ is integral over $k$.
This yields a morphism $X \to \Spec(L)$, and hence a homomorphism
$L \to K$ in view of the universal property of $\gamma_X$.
Thus, $L = K$ for degree reasons). Since the $K$-scheme $X$
is geometrically connected, we see that $X \otimes_K K_s$ 
is a normal $K_s$-variety. In particular, $X$ is geometrically 
irreducible as a $K$-scheme. 
\end{examples}

\subsection{Norm and Weil restriction}
\label{subsec:nwr}

Let $k'/k$ be a finite extension of fields, and $X$ a $k$-scheme.
Then the projection 
\[ \pr_X : X_{k'} \longrightarrow  X \] 
is finite and the sheaf of $\cO_X$-modules $(\pr_X)_*(\cO_{X_{k'}})$ is 
locally free of rank $[k':k] =: n$. Thus, we may assign to any line bundle 
\[ \pi : L' \longrightarrow X_{k'}, \] 
its \emph{norm} $\N(L')$; this is a line bundle on $X$, unique up to 
unique isomorphism (see \cite[II.6.5.5]{EGA}). Assuming that $X$ is 
quasi-projective, we now obtain an interpretation of $\N(L')$ in terms 
of Weil restriction:

\begin{lemma}\label{lem:nwr}
Keep the above notation, and the notation of Example \ref{ex:fp} (ii).

\begin{enumerate}

\item[{\rm (i)}]
The map $\R_{k'/k}(\pi) : \R_{k'/k}(L') \to \R_{k'/k}(X_{k'})$ 
is a vector bundle of rank $n$. 

\item[{\rm (ii)}]
We have an isomorphism of line bundles on $X$
\[ \N(L') \cong  j_X^*  \det \R_{k'/k}(L'). \]

\item[{\rm (iii)}]
If $X$ is equipped with an action of an algebraic group $G$
and $L'$ is $G_{k'}$-linearized, then $\N(L')$ is $G$-linearized.

\end{enumerate}

\end{lemma}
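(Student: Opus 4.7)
The plan is to prove (i) by a local trivialization argument, then identify $j_X^* \R_{k'/k}(L')$ explicitly to deduce (ii), and finally derive (iii) from the functoriality of Weil restriction established in Example \ref{ex:fp}(ii).

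For (i), I would use that $\R_{k'/k}$, being right adjoint to base change, commutes with fiber products. Thus for any morphism $\phi : T \to \R_{k'/k}(X_{k'})$ corresponding under adjunction to $\tilde\phi : T_{k'} \to X_{k'}$, one has
\[ T \times_{\R_{k'/k}(X_{k'})} \R_{k'/k}(L') \;\cong\; T \times_{\R_{k'/k}(T_{k'})} \R_{k'/k}(\tilde\phi^* L'). \]
When $T = \Spec(\cO)$ with $\cO$ local, $T_{k'}$ is semi-local (as $\pr_T$ is finite), so $\tilde\phi^* L' \cong \bA^1_{T_{k'}} = T_{k'} \times_{k'} \bA^1_{k'}$. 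Since $\R_{k'/k}$ commutes with products and $\R_{k'/k}(\bA^1_{k'}) \cong \bA^n_k$ (by a $k$-basis of $k'$), the fiber product reduces to $\bA^n_T$, yielding Zariski-local triviality of rank $n$. The linear structure arises from the $\R_{k'/k}(\bG_{m,k'})$-action on $\R_{k'/k}(L')$ inherited from the scalar $\bG_{m,k'}$-action on $L'$; its restriction along $j_{\bG_m} : \bG_m \hookrightarrow \R_{k'/k}(\bG_{m,k'})$ is scalar multiplication, since $\bG_m(R) = R^\times$ acts on $R \otimes_k k' \cong R^n$ by scalars.

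For (ii), I would describe $j_X^* \R_{k'/k}(L')$ by its functor of points. For $f : S \to X$, a lift corresponds to a section of $L'$ along $f_{k'} : S_{k'} \to X_{k'}$, i.e., an element of $H^0(S_{k'}, f_{k'}^* \cL')$. Since $\pr_X$ is finite, affine base change yields $H^0(S_{k'}, f_{k'}^* \cL') = H^0(S, f^* (\pr_X)_* \cL')$. Hence $j_X^* \R_{k'/k}(L')$ is the geometric vector bundle on $X$ associated to the rank-$n$ locally free sheaf $(\pr_X)_* \cL'$. Taking determinants, and using that both pullback and the total-space construction commute with $\det$, we obtain
\[ j_X^* \det \R_{k'/k}(L') \;\cong\; \det (\pr_X)_* \cL' \;=\; \N(L'), \]
the last equality being the definition of the norm.

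For (iii), the $G_{k'}$-linearization of $L'$ produces, by functoriality of $\R_{k'/k}$ (which commutes with finite products) together with the identification of the scalar action from (i), an $\R_{k'/k}(G_{k'})$-linearization of $\R_{k'/k}(L')$, and hence of its determinant line bundle. Since $j_X : X \to \R_{k'/k}(X_{k'})$ is equivariant relative to $j_G : G \to \R_{k'/k}(G_{k'})$ by Example \ref{ex:fp}(ii), the pullback $j_X^* \det \R_{k'/k}(L') \cong \N(L')$ inherits a $G$-linearization. The main subtlety is in part (i): although $L'$ is a line bundle, $\R_{k'/k}(L')$ has rank $n$, and one must track the induced $\R_{k'/k}(\bG_{m,k'})$-action carefully, verifying that its restriction along $j_{\bG_m}$ reproduces the standard scalar action on fibers; once this is in place, parts (ii) and (iii) follow formally.
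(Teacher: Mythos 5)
Your argument is correct, but for parts (i) and (ii) it takes a genuinely different route from the paper's. The paper works globally: it writes $L' \cong (L'^{\times} \times \bA^1_{k'})/\bG_{m,k'}$, uses that $\R_{k'/k}$ commutes with these constructions to exhibit $\R_{k'/k}(L')$ as the fiber bundle associated with the $\R_{k'/k}(\bG_{m,k'})$-torsor $\R_{k'/k}(L'^{\times})$ and the linear representation $\bV(k')$ (which gives (i) at once), and then proves (ii) by an explicit cocycle computation: choosing a cover $(U_i)$ of $X$ with $L'$ trivial on each $(U_i)_{k'}$ (EGA IV.21.8.1), it identifies the cocycle of $j_X^*\det\R_{k'/k}(L')$ with $(\det(\omega_{ij}))$, which is the norm cocycle by definition. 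You instead exploit the adjunction: local triviality of $\R_{k'/k}(\pi)$ via semi-locality of $T_{k'}$ for $T$ local, and the clean identification of $j_X^*\R_{k'/k}(L')$ with the total space of $(\pr_X)_*\cL'$ by functor of points plus affine base change. Your description of $j_X^*\R_{k'/k}(L')$ is arguably more conceptual and yields (ii) without writing cocycles explicitly. Two points deserve more care. First, in (i), passing from triviality over $\Spec(\cO_{Y,y})$ to triviality over an open neighborhood needs the standard spreading-out argument, which is available because $\R_{k'/k}(\pi)$ is affine of finite presentation; and linearity of the resulting transition functions should be noted (they are multiplications by units of $\cO(U)\otimes_k k'$, hence $\cO(U)$-linear), which is what makes $\R_{k'/k}(L')$ a vector bundle rather than merely an $\bA^n$-fibration with $\bG_m$-action. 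Second, $\N(L') = \det(\pr_X)_*\cL'$ is \emph{not} the definition of the norm in EGA II.6.4--6.5 (which is given multiplicatively via the norm map on units and descent); in general one has $\det(g_*\cL') \cong \N(\cL') \otimes \det(g_*\cO_{X'})$, and the correction term is canonically trivial here only because $(\pr_X)_*\cO_{X_{k'}} \cong \cO_X \otimes_k k'$ is globally free. Justifying that last isomorphism amounts to exactly the cocycle comparison the paper carries out, so you have relocated rather than avoided that computation. Part (iii) is the same as the paper's argument.
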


\begin{proof}
(i) Let $E := \R_{k'/k}(L')$ and $X' := \R_{k'/k}(X_{k'})$. 
Consider the $\bG_{m,k'}$-torsor 
\[ \pi^{\times} : L'^{\times} \longrightarrow X_{k'} \] 
associated with the line bundle $L'$. Recall that
$L' \cong (L'^{\times} \times \bA^1_{k'})/\bG_{m,k'}$, where $\bG_{m,k'}$
acts simultaneously on $L'^{\times}$ and on $\bA^1_{k'}$ 
by multiplication. Using \cite[A.5.2, A.5.4]{CGP}, it follows that
\[
E \cong 
(\R_{k'/k}(L'^{\times}) \times \R_{k'/k}(\bA^1_{k'}))/\R_{k'/k}(\bG_{m,k'}).
\]
This is the fiber bundle on $X'$ associated with the 
$\R_{k'/k}(\bG_{m,k'})$-torsor $\R_{k'/k}(L'^{\times}) \to X'$ and
the $\R_{k'/k}(\bG_{m,k'})$-scheme $\R_{k'/k}(\bA^1_{k'})$. Moreover,
$\R_{k'/k}(\bA^1_{k'})$ is the affine space $\bV(k')$ associated with 
the $k$-vector space $k'$ on which $\R_{k'/k}(\bG_{m,k'})$ acts
linearly, and $\bG_{m,k}$ (viewed as a subgroup scheme of 
$\R_{k'/k}(\bG_{m,k'})$ via the adjunction map) acts by scalar 
multiplication. Indeed, for any $k$-algebra $A$, we have
$\R_{k'/k}(\bA^1_{k'})(A)= A \otimes_k k' = A_{k'}$ on which 
$\R_{k'/k}(\bG_{m,k'})(A) = A_{k'}^*$ and its subgroup 
$\bG_{m,k}(A) = A^*$ act by multiplication.
 
(ii) The determinant of $E$ is the line bundle associated with the above 
$\R_{k'/k}(\bG_{m,k'})$-torsor and the 
$\R_{k'/k}(\bG_{m,k'})$-module $\bigwedge^n(k')$
(the top exterior power of the $k$-vector space $k'$). 
To describe the pull-back of this line bundle under 
$j_X: X \to X'$, choose a Zariski open covering $(U_i)_{i \in I}$ 
of $X$ such that the $(U_i)_{k'}$ cover $X_{k'}$ and the pull-back 
of $L'$ to each $(U_i)_{k'}$ is trivial (such a covering exists by
\cite[IV.21.8.1]{EGA}). Also, choose trivializations 
\[ \eta_i : L'_{(U_i)_{k'}} \stackrel{\cong}{\longrightarrow}
(U_i)_{k'} \times_{k'} \bA^1_{k'}. \]
This yields trivializations
\[ \R_{k'/k}(\eta_i) : E_{U'_i} \stackrel{\cong}{\longrightarrow} 
U'_i \times_k \bV(k'), \]
where $U'_i := \R_{k'/k}((U_i)_{k'})$. Note that the $U'_i$ do not 
necessarily cover $X'$, but the $j_X^{-1}(U'_i) = U_i$ do cover $X$. 
Thus, $j_X^*(E)$ is equipped with trivializations
\[  j_X^*(E)_{U_i} \stackrel{\cong}{\longrightarrow} 
U_i \times_k \bV(k'). \]
Consider the $1$-cocycle
$(\omega_{ij} := (\eta_i \eta_j^{-1})_{(U_i)_{k'} \cap (U_j)_{k'}})_{i,j}$
with values in $\bG_{m,k'}$. Then the line bundle 
$j_X^*(\det(E)) = \det(j_X^*(E))$ is defined by the $1$-cocycle 
$(\det(\omega_{ij}))_{i,j}$ with values in $\bG_{m,k}$, 
where $\det(\omega_{ij})$ denotes the determinant of 
the multiplication by $\omega_{ij}$ in the $\cO(U_i \cap U_j)$-algebra
$\cO(U_i \cap U_j) \otimes_k k'$. 
It follows that $j_X^*(\det(E)) \cong \N(L')$ in view of the definition 
of the norm (see \cite[II.6.4, II.6.5]{EGA}). 

(iii) By Example \ref{ex:fp} (ii), $\R_{k'/k}(X_{k'})$
is equipped with an action of $\R_{k'/k}(G_{k'})$; moreover, $j_X$ 
is equivariant relative to $j_G : G \to \R_{k'/k}(G_{k'})$. Also, 
the action of $G_{k'} \times \bG_{m,k'}$ on $L'$ yields an action
of $\R_{k'/k}(G) \times \R_{k'/k}(\bG_{m,k'})$ on $E$ such that
$\bG_{m,k} \subset \R_{k'/k}(\bG_{m,k'})$ acts by scalar 
multiplication on fibers. Thus, the vector bundle $E$ is equipped 
with a linearization relative to $\R_{k'/k}(G_{k'})$, which induces 
a linearization of its determinant. This yields the assertion in view of (ii).
\end{proof}

\subsection{Iterated Frobenius morphisms}
\label{subsec:ifm}

In this subsection, we assume that $\car(k) = p > 0$. 
Then every $k$-scheme $X$ is equipped with the 
\emph{absolute Frobenius endomorphism} $F_X$: it induces 
the identity on the underlying topological space, 
and the homomorphism of sheaves of algebras
$F_X^{\#} : \cO_X \to (F_X)_*(\cO_X) = \cO_X$
is the $p$th power map, $f \mapsto f^p$. Note that $F_X$
is not necessarily a morphism of $k$-schemes, as the
structure map $q_X : X \to \Spec(k)$ lies in a commutative
diagram
\[
\xymatrix{
X \ar[r]^-{F_X} \ar[d]_{q_X} & X \ar[d]^{q_X} \\ 
\Spec(k) \ar[r]^-{F_k} & \Spec(k),  \\}
\]
where $F_k := F_{\Spec(k)}$. We may form the commutative diagram
\[
\xymatrix{X \ar[d]_{F_{X/k}} \ar[dr]^{F_X} \\
X^{(p)} \ar[r]^-{\pr_X} \ar[d]_{q_{X^{(p)}}} & X \ar[d]^{q_X} \\ 
\Spec(k) \ar[r]^-{F_k} & \Spec(k),  \\}
\]
where the square is cartesian and $F_{X/k} \circ q_{X^{(p)}} = q_X$.
In particular, $F_{X/k} : X \to X^{(p)}$
is a morphism of $k$-schemes: the \emph{relative Frobenius morphism}.
The underlying topological space of $X^{(p)}$ may be identified with
that of $X$; then $\cO_{X^{(p)}} = \cO_X \otimes_{F_k} k$ and
the morphism $F_{X/k}$ induces the identity on topological spaces,
while $F_{X/k}^{\#} : \cO_X \otimes_{F_k} k \to \cO_X$ is given by
$f \otimes z \mapsto z f^p$.

The assignment $X \mapsto X^{(p)}$ extends to a covariant endofunctor
of the category of schemes over $k$, 
which commutes with products and field extensions; moreover, 
the assignment $X \mapsto F_{X/k}$ extends to a morphism of functors
(see e.g. \cite[VIIA.4.1]{SGA3}). In view of Subsection \ref{subsec:fp}, 
it follows that for any $k$-group scheme $G$, there is a canonical 
$k$-group scheme structure on $G^{(p)}$ such that 
$F_{G/k} : G \to G^{(p)}$ is a morphism of group schemes.
Its kernel is called the \emph{Frobenius kernel} of $G$;
we denote it by $G_1$. Moreover, for any $G$-scheme $X$, 
there is a canonical $G^{(p)}$-scheme structure on $X^{(p)}$ 
such that $F_{X/k}$ is equivariant relative to $F_G$.

By \cite[XV.1.1.2]{SGA5}, the morphism $F_{X/k}$ is integral, surjective 
and radicial; equivalently, $F_{X/k}$ is a universal homeomorphism
(recall that a morphism of schemes is \emph{radicial} 
if it is injective and induces purely inseparable extensions of residue 
fields). Thus, $F_{X/k}$ is finite if $X$ is of finite type over $k$; then 
$X^{(p)}$ is of finite type over $k$ as well, since it is obtained from
$X$ by the base change $F_k : \Spec(k) \to \Spec(k)$. In particular,
for any algebraic group $G$, the Frobenius kernel $G_1$ is finite 
and radicial over $\Spec(k)$. Equivalently, $G_1$ is an
\emph{infinitesimal group scheme}. 

Next, let $\cL$ be an invertible sheaf on $X$, and 
$f : L \to X$ the corresponding line bundle. Then
$f^{(p)} : L^{(p)} \to X^{(p)}$ is a line bundle, and there is 
a canonical isomorphism 
\[ F_{X/k}^*(L^{(p)}) \cong L^{\otimes p} \]
(see \cite[XV.1.3]{SGA5}). If $X$ is a $G$-scheme and $L$ is 
$G$-linearized, then $L^{(p)}$ is $G^{(p)}$-linearized as well,
in view of Example \ref{ex:fp} (i). Also, note that 
\emph{$L$ is ample if and only if $L^{(p)}$ is ample}.
Indeed, $L^{(p)}$ is the base change of $L$ under $F_k$, and hence 
is ample if so is $L$ (see \cite[II.4.6.13]{EGA}).
Conversely, if $L^{(p)}$ is ample, then so is $F_{X/k}^*(L^{(p)})$
as $F_{X/k}$ is affine (see e.g. \cite[II.5.1.12]{EGA}); thus, 
$L$ is ample as well.

We now extend these observations to the 
\emph{iterated relative Frobenius morphism}
\[ F^n_{X/k} : X \longrightarrow X^{(p^n)}, \]
where $n$ is a positive integer. Recall from 
\cite[VIIA.4.1]{SGA3} that $F^n_{X/k}$ is defined inductively by
$F^1_{X/k} = F_{X/k}$, $X^{(p^n)} = (X^{(p^{n-1})})^{(p)}$ and
$F^n_{X/k}$ is the composition
\[ 
\xymatrixcolsep{5pc}\xymatrix{
X \ar[r]^{F_{X/k}} &
X^{(p)} \ar[r]^-{F_{X^{(p)}/k}} &
X^{(p^2)} \to \cdots \to 
X^{(p^{n-1})} \ar[r]^-{F_{X^{(p^{n-1})}/k}} &
X^{(p^n)}.
}
\]

This yields readily:

\begin{lemma}\label{lem:ifm}
Let $X$ be a scheme of finite type, $L$ a line bundle on $X$,
and $G$ an algebraic group.

\begin{enumerate}

\item[{\rm (i)}] The scheme $X^{(p^n)}$ is of finite type, and 
$F^n_{X/k}$ is finite, surjective and radicial.

\item[{\rm (ii)}] $F^n_{G/k} : G \to G^{(p^n)}$ is a morphism of
algebraic groups, and its kernel (the $n$th Frobenius kernel $G_n$)
is infinitesimal.

\item[{\rm (iii)}] $L^{(p^n)}$  is a line bundle on $X^{(p^n)}$, 
and we have a canonical isomorphism 
\[ (F^n_{X/k})^*(L^{(p^n)}) \cong L^{\otimes p^n}. \] 
Moreover, $L$ is ample if and only if $L^{(p^n)}$ is ample.

\item[{\rm (iv)}] If $X$ is a $G$-scheme, then $X^{(p^n)}$ is a
$G^{(p^n)}$-scheme and $F^n_{X/k}$ is equivariant relative to
$F^n_{G/k}$. If in addition $L$ is $G$-linearized, then
$L^{(p^n)}$ is $G^{(p^n)}$-linearized.

\end{enumerate}

\end{lemma}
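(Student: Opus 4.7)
The plan is to prove all four parts by a straightforward induction on $n$, leveraging the single-Frobenius facts ($n=1$) established in the discussion preceding the lemma. By construction, $F^n_{X/k}$ factors as $F_{X^{(p^{n-1})}/k} \circ F^{n-1}_{X/k}$ and $X^{(p^n)} = (X^{(p^{n-1})})^{(p)}$, so every assertion about $F^n_{X/k}$ reduces to the corresponding assertion for the single relative Frobenius, composed with the inductive hypothesis.

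For (i), I would note that the finite type property is stable under the base change $F_k$, so $X^{(p^n)}$ remains of finite type, and finiteness, surjectivity and radicialness are all closed under composition. Part (ii) follows since $(-)^{(p)}$ commutes with finite products and $F_{-/k}$ is a morphism of functors, whence each iterate $F^n_{G/k}$ is a morphism of group schemes by the considerations of Subsection \ref{subsec:fp}; its kernel $G_n$ is a closed subscheme of $G$ that coincides with the scheme-theoretic fiber of $F^n_{G/k}$ over $e_{G^{(p^n)}}$, and since finite radicial morphisms are stable under base change, $G_n$ is finite and radicial over $\Spec(k)$, i.e.\ infinitesimal. For (iii), I would iterate the canonical isomorphism $F_{Y/k}^*(M^{(p)}) \cong M^{\otimes p}$ to compute
\[
(F^n_{X/k})^* (L^{(p^n)}) \;\cong\; (F^{n-1}_{X/k})^* \bigl( F_{X^{(p^{n-1})}/k}^* (L^{(p^{n-1})})^{(p)} \bigr)
\;\cong\; (F^{n-1}_{X/k})^* (L^{(p^{n-1})})^{\otimes p}
\;\cong\; L^{\otimes p^n},
\]
invoking the $n=1$ identity and the inductive hypothesis. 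The ampleness equivalence is then argued as in the $n=1$ case: base change along $F_k^n$ preserves ampleness, giving that $L$ ample implies $L^{(p^n)}$ ample, while affineness of $F^n_{X/k}$ (from (i)) combined with the pullback identity forces the converse, since $L$ is ample iff some positive tensor power $L^{\otimes p^n}$ is. Part (iv) is pure functoriality: applying Subsection \ref{subsec:fp} iteratively to the endofunctor $(-)^{(p)}$ and the morphism of functors $F_{-/k}$ yields the $G^{(p^n)}$-action on $X^{(p^n)}$, the equivariance of $F^n_{X/k}$ relative to $F^n_{G/k}$, and the $G^{(p^n)}$-linearization of $L^{(p^n)}$.

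I do not expect any genuine obstacle — as the paper's phrase \emph{``This yields readily''} suggests, the lemma is essentially a formal iteration of the $n=1$ case. The only point requiring mild care is tracking the correct Frobenius twist at each stage of the induction (namely, $F_{X^{(p^{n-1})}/k}$ rather than $F_{X/k}$ in the inductive factorization), but once this bookkeeping is in place, each clause transfers mechanically from $n-1$ to $n$.
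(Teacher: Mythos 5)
Your proof is correct and is essentially the argument the paper intends: the paper gives no explicit proof beyond the phrase ``This yields readily,'' relying on exactly the induction you describe, namely iterating the $n=1$ facts established just before the lemma via the factorization $F^n_{X/k} = F_{X^{(p^{n-1})}/k} \circ F^{n-1}_{X/k}$ together with the stability of finite/surjective/radicial under composition, the functoriality of $(-)^{(p)}$, and the affineness argument for the ampleness equivalence.
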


\begin{remarks}\label{rem:ifm}
(i) If $X$ is the affine space $\bA^d_k$, then 
$X^{(p^n)} \cong \bA^d_k$ for all $n \geq 1$. More generally,
if $X \subset \bA^d_k$ is the zero subscheme of 
$f_1,\ldots,f_m \in k[x_1,\ldots,x_d]$, then 
$X^{(p^n)} \subset \bA^d_k$ is the zero subscheme of 
$f_1^{(p^n)}, \ldots, f_m^{(p^n)}$, where each $f_i^{(p^n)}$
is obtained from $f_i$ by raising all the coefficients to the
$p^n$th power.

\medskip

\noindent
(ii) Some natural properties of $X$ are not preserved
under Frobenius twist $X \mapsto X^{(p)}$. For example, assume 
that $k$ is imperfect and choose $a \in k \setminus k^p$, where 
$p := \car(k)$. Let $X := \Spec(K)$, where 
$K$ denotes the field $k(a^{1/p}) \cong k[x]/(x^p -a)$. Then
$X^{(p^n)} \cong \Spec(k[x]/(x^p -a^{p^n})) 
\cong \Spec(k[y]/(y^p))$ is non-reduced for all $n \geq 1$.

This can be partially remedied by replacing $X^{(p)}$
with the scheme-theoretic image of $F_{X/k}$; for example, 
one easily checks that this image is geometrically reduced
for $n \gg 0$. But given a normal variety $X$, it may happen 
that $F^n_{X/k}$ is an epimorphism and $X^{(p^n)}$ is non-normal 
for any $n \geq 1$. For example, take $k$ and $a$ as above
and let $X \subset \bA^2_k = \Spec(k[x,y])$ be the zero subscheme 
of $y^{\ell} - x^p + a$, where $\ell$ is a prime and $\ell \neq p$. 
Then $X$ is a regular curve: indeed, by the jacobian criterion, 
$X$ is smooth away from the closed point $P := (a^{1/p},0)$; also, 
the maximal ideal of $\cO_{X,P}$ is generated by the image of $y$, 
since the quotient ring
$k[x,y]/(y^{\ell} - x^p +a, y) \cong k[x]/(x^p - a)$ is a field.
Moreover, $X^{(p^n)} \subset \bA^2_k$ is the zero subscheme of
$y^{\ell} - x^p + a^{p^n}$, and hence is not regular at the point 
$(a^{p^{n-1}},0)$. Also, $F^n_{X/k}$ is an epimorphism as 
$X^{(p^n)}$ is integral.  
\end{remarks}

\subsection{Quotients by infinitesimal group schemes}
\label{subsec:qi}

Throughout this subsection, we still assume that $\car(k) = p > 0$.
Recall from \cite[VIIA.8.3]{SGA3} that for any algebraic group
$G$, there exists a positive integer $n_0$ such that the quotient 
group scheme $G/G_n$ is smooth for $n \ge n_0$. In particular,
for any infinitesimal group scheme $I$, there exists a positive
integer $n_0$ such that the $n$th Frobenius kernel $I_n$ is 
the whole $I$ for $n \geq n_0$. The smallest such integer is called 
the \emph{height} of $I$; we denote it by~$h(I)$.

\begin{lemma}\label{lem:quot}
Let $X$ be a scheme of finite type equipped with an action $\alpha$ 
of an infinitesimal group scheme $I$.

\begin{enumerate}

\item[{\rm (i)}] There exists a categorical quotient
\[ \varphi = \varphi_{X,I} : X \longrightarrow X/I, \]
where $X/I$ is a scheme of finite type and $\varphi$ is a finite, surjective, 
radicial morphism. 

\item[{\rm (ii)}] For any integer $n \geq h(I)$, the relative Frobenius
morphism $F^n_{X/k} : X \to X^{(p^n)}$ factors uniquely as
\[ X \stackrel{\varphi}{\longrightarrow} X/I 
\stackrel{\psi}{\longrightarrow} X^{(p^n)}. \]
Moreover, $\psi = \psi_{X,I}$ is finite, surjective and radicial as well.

\item[{\rm (iii)}] Let $n \geq h(I)$ and $L$ a line bundle on $X$.
Then $M := \psi^*(L^{(p^n)})$ is a line bundle on $X/I$, and 
$\varphi^*(M) \cong L^{\otimes p^n}$. Moreover, $L$ is ample
if and only if $M$ is ample.

\item[{\rm (iv)}] If $X$ is a normal variety, then so is $X/I$.

\end{enumerate}

\end{lemma}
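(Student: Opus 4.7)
The plan is to handle the four parts in order, using the factorization of the iterated relative Frobenius morphism as the main lever.

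For (i), I would first observe that since $I$ is infinitesimal, $|I|$ is a single point, so $\alpha$ and $\pr_X$ agree on underlying topological spaces and every open subscheme of $X$ is automatically $I$-stable (as $\alpha^{-1}(U) = I \times U$ for any open $U \subseteq X$). Covering $X$ by affine opens, I may set, on each $U = \Spec(A)$, the quotient $U/I := \Spec(A^I)$, the invariants under the coaction $\sigma : A \to A \otimes_k R$ with $R := \cO(I)$. Since $R$ is a finite $k$-vector space, $A \otimes_k R$ is a free $A$-module, and a standard argument shows that $A$ is a finite $A^I$-module, so $\varphi : \Spec(A) \to \Spec(A^I)$ is a finite categorical quotient. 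Gluing yields the global $\varphi$; surjectivity comes from the integral extension $A^I \subseteq A$, and radiciality falls out of (ii).

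For (ii), the key observation is that $n \geq h(I)$ forces $I_n = I$, so $F^n_{I/k}$ factors through the identity section of $I^{(p^n)}$. Combined with the equivariance of $F^n_{X/k}$ relative to $F^n_{I/k}$ (Lemma \ref{lem:ifm} (iv)), this says $F^n_{X/k} \circ \alpha = F^n_{X/k} \circ \pr_X$, i.e., $F^n_{X/k}$ is $I$-invariant, so the universal property of $\varphi$ yields a unique $\psi$ with $F^n_{X/k} = \psi \circ \varphi$. Surjectivity and radiciality of $\psi$, and retroactively of $\varphi$, then follow by formal diagram chases from the corresponding properties of $F^n_{X/k}$ (Lemma \ref{lem:ifm} (i)) combined with the surjectivity of $\varphi$. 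Finiteness of $\psi$ can be checked locally, since $F^n_{X/k}$ is finite and $\varphi$ is surjective.

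Part (iii) is essentially formal: setting $M := \psi^*(L^{(p^n)})$, one computes $\varphi^* M = (F^n_{X/k})^*(L^{(p^n)}) \cong L^{\otimes p^n}$ using Lemma \ref{lem:ifm} (iii). Ampleness is preserved and reflected by finite surjective morphisms of Noetherian schemes, so $L$ is ample iff $L^{\otimes p^n} = \varphi^* M$ is ample iff $M$ is ample.

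The main obstacle is (iv). Normality is local on $X/I$, so I may assume $X = \Spec(A)$ with $A$ a normal domain and must show that $A^I$ is a normal domain. Visibly $A^I \subseteq A$, so $A^I$ is a domain, and it suffices to prove the equality $A \cap \Frac(A^I) = A^I$ inside $\Frac(A)$: then any $b \in \Frac(A^I)$ integral over $A^I$ is integral over $A$, hence lies in $A$ by normality, hence in $A^I$. To verify the equality, write such a $b$ as $b = c/d$ with $c, d \in A^I$ and apply $\sigma$ to the identity $c = b d$ in $A$ to obtain $(\sigma(b) - b \otimes 1)(d \otimes 1) = 0$ in $A \otimes_k R$. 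Since $A \otimes_k R$ is a free $A$-module and $d$ is a nonzero element of the domain $A$, the element $d \otimes 1$ is a non-zero-divisor, forcing $\sigma(b) = b \otimes 1$, i.e., $b \in A^I$. Globally, irreducibility of $X/I$ follows from $\varphi$ being a surjective radicial morphism, hence a homeomorphism onto its image.
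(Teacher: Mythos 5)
Your proposal is correct and follows essentially the same route as the paper: parts (ii)--(iv) coincide with the paper's arguments almost verbatim (in particular the non-zero-divisor trick with $d\otimes 1$ in (iv) is exactly the paper's), while for (i) you build the quotient by gluing $\Spec(A^I)$ over an affine cover, whereas the paper invokes the general existence theorem for quotients by finite locally free actions (SGA3 V.4.1) and only afterwards deduces finiteness, radiciality and finite-typeness of $X/I$ from the factorization of $F^n_{X/k}$, as you also do. The only points worth making explicit in your (i) are why $\Spec(A^I)$ is a categorical quotient in the category of \emph{all} schemes (not just affine ones) and why $A^I$ is a finitely generated $k$-algebra (Artin--Tate, or simply read finite-typeness off from (ii) as the paper does); both are standard.
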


\begin{proof}
(i) Observe that the morphism 
\[ \gamma := \id_X \times \alpha : I \times X \longrightarrow I \times X \]
is an $I$-automorphism and satisfies $\gamma \circ \pr_X = \alpha$
on $I \times X$. As $I$ is infinitesimal, the morphism $\pr_X$ is finite, 
locally free and bijective; thus, so is $\alpha$. In view of 
\cite[V.4.1]{SGA3}, it follows that the categorical quotient $\varphi$ 
exists and is integral and surjective. The remaining assertions 
will be proved in (ii) next.

(ii) By Lemma \ref{lem:ifm} (iv), $F^n_{X/k}$ is $I$-invariant for 
any $n \geq h(I)$. Since $\varphi$ is a categorical quotient, this yields
the existence and uniqueness of $\psi$. As $F^n_{X/k}$ is universally
injective, so is $\varphi$; equivalently, $\varphi$ is radicial. 
In view of (i), it follows that $\varphi$ is a universal homeomorphism. 
As $F^n_{X/k}$ is a universal homeomorphism as well, so is $\psi$.

Recall from Lemma \ref{lem:ifm} that $X^{(p^n)}$ is of finite type and 
$F^n_{X/k}$ is finite. As a consequence, $\varphi$ and $\psi$ are finite, 
and $X/I$ is of finite type.

(iii) The first assertion follows from Lemma  \ref{lem:ifm} (iii).
If $L$ is ample, then so is $L^{(p^n)}$ by that lemma; thus,
$M$ is ample as $\psi$ is affine. Conversely, if $M$ is ample,
then so is $L$ as $\varphi$ is affine.

(iv) Note that $X/I$ is irreducible, since $\varphi$ is a homeomorphism.
Using again the affineness of $\varphi$, we may thus assume that $X/I$,
and hence $X$, are affine. Then the assertion follows by a standard
argument of invariant theory. More specifically, let $X = \Spec(R)$,
then $R$ is an integral domain and $X/I = \Spec(R^I)$, where 
$R^I \subset R$ denotes the subalgebra of invariants, consisting of 
those $f \in R$ such that $\alpha^{\#}(f) = \pr_X^{\#}(f)$ in 
$\cO(I \times X)$. Thus, $R^I$ is a domain. We check that it is normal: 
if $f \in \Frac(R^I)$ is integral over $R^I$, then $f \in \Frac(R)$ is 
integral over $R$, and hence $f \in R$. To complete the proof,
it suffices to show that $f$ is invariant. But $f = \frac{f_1}{f_2}$
where $f_1, f_2 \in R^I$ and $f_2 \neq 0$; this yields
\[ 0 = \alpha^{\#}(f_1) - \pr_X^{\#}(f_1) 
= \alpha^{\#}(f f_2) - \pr_X^{\#}(f f_2) 
= (\alpha^{\#}(f) - \pr_X^{\#}(f)) \pr_X^{\#}(f_2) \]
in $\cO(I \times X) \cong \cO(I)  \otimes_k R$.
Via this isomorphism, $\pr_X^{\#}(f_2)$ is identified
with $1 \otimes f_2$, which is not a zero divisor in   
$\cO(I) \otimes_k R$ (since its image in 
$\cO(I) \otimes_k \Frac(R)$ is invertible). Thus,
$\alpha^{\#}(f) - \pr_X^{\#}(f) = 0$ as desired. 
\end{proof}

\begin{remark}\label{rem:qi}
With the notation of Lemma \ref{lem:quot}, we may identify 
the underlying topological space of $X/I$ with that of $X$,
since $\varphi$ is radicial. Then the structure sheaf $\cO_{X/I}$
is just the sheaf of invariants $\cO_X^I$. As a consequence,
$\varphi_{X,I}$ is an epimorphism.
\end{remark}

\begin{lemma}\label{lem:prod}
Let $X$ (resp. $Y$) be a scheme of finite type equipped with 
an action of an infinitesimal algebraic group $I$ (resp.~$J$). 
Then the morphism 
$\varphi_{X,I} \times \varphi_{Y,J} : X \times Y \to X/I \times Y/J$
factors uniquely through an isomorphism 
\[ (X \times Y)/(I \times J) \stackrel{\cong}{\longrightarrow} 
X/I \times Y/J. \]
\end{lemma}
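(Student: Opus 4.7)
The plan is to construct the map from the universal property of the categorical quotient and check it is an isomorphism by a local computation on affine opens, which reduces to a flatness argument over the field $k$.

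Since a finite product of infinitesimal group schemes is again infinitesimal, $I \times J$ acts on $X \times Y$ via the product action $(i,j) \cdot (x,y) = (i \cdot x,\, j \cdot y)$, and Lemma \ref{lem:quot}(i) yields a categorical quotient $\varphi_{X \times Y,\, I \times J} : X \times Y \to (X \times Y)/(I \times J)$. The morphism $\varphi_{X,I} \times \varphi_{Y,J}$ is tautologically $(I \times J)$-invariant, hence factors uniquely as $\psi \circ \varphi_{X \times Y,\, I \times J}$ for some morphism $\psi : (X \times Y)/(I \times J) \to X/I \times Y/J$, giving the factorization in the statement.

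To check that $\psi$ is an isomorphism, I would invoke Remark \ref{rem:qi}: each of the three quotient morphisms is radicial and affine, identifies the underlying topological space of the target with that of its source, and has structure sheaf on the target equal to the subsheaf of invariants. Hence $\psi$ is the identity on underlying spaces, and the problem becomes a sheaf-theoretic comparison. Using that $\varphi_{X,I}$ and $\varphi_{Y,J}$ are homeomorphisms, the pre-images of affine opens of $X/I$ and $Y/J$ form coverings of $X$ by $I$-stable affine opens and of $Y$ by $J$-stable affine opens; their products cover $X \times Y$ by $(I \times J)$-stable affine opens. It therefore suffices, for $X = \Spec(R)$ and $Y = \Spec(S)$ affine, to prove the identity
\[ (R \otimes_k S)^{I \times J} = R^I \otimes_k S^J \]
inside $R \otimes_k S$.

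For this identity, write the coaction of $I$ as a $k$-linear map $\alpha^{\#}_X : R \to \cO(I) \otimes_k R$, so that $R^I = \ker(\delta_R)$ for $\delta_R(r) := \alpha^{\#}_X(r) - 1 \otimes r$. Since $k$ is a field, $- \otimes_k S$ is exact and preserves kernels, so the $I$-invariants of $R \otimes_k S$ (for the action through the first factor) are $\ker(\delta_R \otimes \id_S) = R^I \otimes_k S$; symmetrically, the $J$-invariants are $R \otimes_k S^J$. Because the $I \times J$-action decomposes as commuting actions of $I$ on the first factor and $J$ on the second, applying these two invariant functors in succession yields $(R \otimes_k S)^{I \times J} = R^I \otimes_k S^J$. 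The one step requiring care is this flatness argument, but it is clean over a field; the rest of the proof is essentially formal manipulation of categorical quotients.
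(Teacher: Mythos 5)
Your proposal is correct and follows essentially the same route as the paper: factor through the quotient via the universal property, reduce to the affine case using Remark \ref{rem:qi} and the affineness of the quotient morphisms, and establish the identity $(R \otimes_k S)^{I \times J} = R^I \otimes_k S^J$ by exploiting the exactness of $- \otimes_k V$ over the field $k$ (your flatness argument is the same as the paper's basis argument for the case where one group is trivial). The only, cosmetic, difference is in the final step: the paper computes the intersection $(R^I \otimes S) \cap (R \otimes S^J)$ by choosing vector-space complements, whereas you apply the two invariant functors in succession; both variants rest on the same linear algebra over $k$.
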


\begin{proof}
Since $\varphi_{X,I} \times \varphi_{Y,J}$ is invariant under 
$I \times J$, it factors uniquely through a morphism
$f : (X \times Y)/(I \times J) \to X/I \times Y/J$. To show that
$f$ is an isomorphism, we may assume by Remark \ref{rem:qi}
that $X$ and $Y$ are affine. Let $R := \cO(X)$ and
$S := \cO(Y)$; then we are reduced to showing that the natural
map  
\[ f^{\#} : R^I \otimes S^J \longrightarrow (R \otimes S)^{I \times J} \]
is an isomorphism. Here and later in this proof, all tensor products
are taken over $k$.

Clearly, $f^{\#}$ is injective. To show the surjectivity, we consider first
the case where $J$ is trivial. Choose a basis $(s_a)_{a \in A}$ of the
$k$-vector space $S$. Let $f \in R \otimes S$ and write 
$f = \sum_{a \in A} r_a \otimes s_a$, where the $r_a \in R$ are unique.
Then $f \in (R \otimes S)^I$ if and only if 
$\alpha^*(f) = \pr_X^*(f)$ in $\cO(I \times X \times Y)$, i.e.,
\[ \sum_{a \in A} \alpha^*(r_a) \otimes s_a = 
\sum_{a \in A} \pr_X^*(r_a) \otimes s_a \] 
in $\cO(I) \otimes R \otimes S$. As the $s_a$ are linearly independent 
over $\cO(I) \otimes R$, this yields $\alpha^*(r_a) = \pr_X^*(r_a)$, 
i.e., $r_a \in R^I$, for all $a \in A$. In turn, this yields 
$(R \otimes S)^I = R^I \otimes S$.

In the general case, we use the equality
\[ (R \otimes S)^{I \times J} = (R \otimes S)^I \cap (R \otimes S)^J \]
of subspaces of $R \otimes S$. In view of the above step, this yields
\[ (R \otimes S)^{I \times J} = (R^I \otimes S) \cap (R \otimes S^J). \]
Choose decompositions of $k$-vector spaces
$R = R^I \oplus V$ and $S = S^J \oplus W$; then we obtain a
decomposition
\[ R \otimes S = (R^I \otimes S^J) \oplus (R^I \otimes W) 
\oplus (V \otimes S^J) \oplus (V \otimes W), \]
and hence the equality
\[ (R^I \otimes S) \cap (R \otimes S^J) = R^I \otimes S^J. \]
\end{proof}

\begin{lemma}\label{lem:equiv}
Let $G$ be an algebraic group, $X$ a $G$-scheme of finite type 
and $n$ a positive integer. Then there exists a unique action of $G/G_n$ 
on $X/G_n$ such that the morphism $\varphi_{X,G_n} : X \to X/G_n$ 
(resp.~$\psi_{X,G_n} : X/G_n \to X^{(p^n)}$) is equivariant relative to 
$\varphi_{G,G_n} : G \to G/G_n$ 
(resp.~$\psi_{G,G_n} : G/G_n \to G^{(p^n)}$). 
\end{lemma}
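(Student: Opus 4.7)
The plan is to construct the desired action $\bar\alpha \colon G/G_n \times X/G_n \to X/G_n$ as the unique descent of $\varphi_{X,G_n} \circ \alpha \colon G \times X \to X/G_n$ along the morphism $\varphi_{G,G_n} \times \varphi_{X,G_n}$. By Lemma \ref{lem:prod}, this latter morphism identifies $G/G_n \times X/G_n$ with the categorical quotient of $G \times X$ by the product action of $G_n \times G_n$, where the first copy acts on $G$ by right translation and the second on $X$ via the restriction of $\alpha$. So the task reduces to checking that $\varphi_{X,G_n} \circ \alpha$ is invariant under $G_n \times G_n$.

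Concretely, one must verify that the two morphisms $G_n \times G_n \times G \times X \rightrightarrows X/G_n$, namely $(h_1, h_2, g, x) \mapsto \varphi_{X,G_n}(g h_1 h_2 \cdot x)$ and $(h_1, h_2, g, x) \mapsto \varphi_{X,G_n}(g \cdot x)$, coincide. The key identity is $g h_1 h_2 \cdot x = (g h_1 h_2 g^{-1}) \cdot (g \cdot x)$. Since $G_n = \Ker(F^n_{G/k})$ is a normal subgroup scheme of $G$ by Lemma \ref{lem:ifm}(ii), the conjugation morphism $G \times G_n \to G$ factors through $G_n$, so the map $(h_1, h_2, g, x) \mapsto (g h_1 h_2 g^{-1}, g \cdot x)$ lands in $G_n \times X$. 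Composing with the $G_n$-action on $X$ recovers $g h_1 h_2 \cdot x$, and then applying $\varphi_{X,G_n}$ collapses the $G_n$-factor by the defining property of the categorical quotient, leaving $\varphi_{X,G_n}(g \cdot x)$. This establishes the invariance and hence produces $\bar\alpha$; uniqueness follows from $\varphi_{G,G_n} \times \varphi_{X,G_n}$ being an epimorphism (Remark \ref{rem:qi} applied to the product).

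The remaining verifications are routine. To see $\bar\alpha$ is an action, one checks the associativity and identity diagrams by pulling back along suitable products of quotient maps, using Lemma \ref{lem:prod} iteratively to identify $(G/G_n)^2 \times X/G_n$ with the quotient of $G \times G \times X$ by $G_n \times G_n \times G_n$; the pulled-back diagrams commute because $\alpha$ itself is an action. For the equivariance of $\psi_{X,G_n}$ relative to $\psi_{G,G_n}$, one observes that both $\psi_{X,G_n} \circ \bar\alpha$ and the composition of $\psi_{G,G_n} \times \psi_{X,G_n}$ with the $G^{(p^n)}$-action on $X^{(p^n)}$ from Lemma \ref{lem:ifm}(iv) pull back along $\varphi_{G,G_n} \times \varphi_{X,G_n}$ to the same morphism $F^n_{X/k} \circ \alpha$, using the factorization $F^n_{X/k} = \psi_{X,G_n} \circ \varphi_{X,G_n}$ and the equivariance of $F^n_{X/k}$; the epimorphism property forces them to agree. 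The only genuinely substantive step is the invariance calculation, which crucially depends on the normality of $G_n$ to replace the asymmetric translation by an inner conjugate action that $\varphi_{X,G_n}$ already equalizes.
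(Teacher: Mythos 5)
Your proposal is correct and follows essentially the same route as the paper: descend $\varphi_{X,G_n}\circ\alpha$ along $\varphi_{G,G_n}\times\varphi_{X,G_n}$ using Lemma \ref{lem:prod}, with invariance under $G_n\times G_n$ established by the same conjugation trick exploiting normality of $G_n$ (the paper writes $(ug)(vx)=u(gvg^{-1})gx$ with left translation on $G$ where you use right translation, an immaterial difference since $G_n$ is normal), and the action axioms and uniqueness checked via the epimorphism property of the quotient maps. Your explicit verification of the $\psi$-equivariance is a small addition the paper leaves implicit, and it is argued correctly.
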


\begin{proof}
Denote as usual by $\alpha : G \times X \to X$ the action 
and write for simplicity
$\varphi_X := \varphi_{X,G_n}$ and $\varphi_G := \varphi_{G,G_n}$. 
Then the map 
$\varphi_X \circ \alpha : G \times X \to X/G_n$
is invariant under the natural action of $G_n \times G_n$, since
we have for any scheme $S$ and any $u,v \in G_n(S)$,
$g \in G(S)$, $x \in X(S)$ that $(ug)(vx) = u (g v g^{-1}) g x$
and $g v g^{-1} \in G_n(S)$. Also, the map
\[ \varphi_G \times \varphi_X : G \times X 
\longrightarrow G/G_n \times X/G_n \]
is the categorical quotient by $G_n \times G_n$ in view
of Lemma \ref{lem:prod}. Thus, there exists a unique morphism
$\beta : G/G_n \times X/G_n \to X/G_n$
such that the following diagram commutes:
\[ 
\xymatrix{G \times X \ar[r]^{\alpha} \ar[d]_{\varphi_G \times \varphi_X} 
& X \ar[d]^{\varphi_X} \\
G/G_n \times X/G_n \ar[r]^-{\beta} & X/G_n. \\
}
\]
We have in particular 
$\beta(e_{G/G_n},\varphi_X(x)) = \varphi_X(x)$
for any schematic point $x$ of $X$. As $\varphi_X$
is an epimorphism (Remark \ref{rem:qi}), it follows that
$\beta(e_{G/G_n}, z) = z$ for any schematic point $z$ of
$X/G_n$. Likewise, we obtain 
$\beta(x,\beta(y,z)) = \beta(xy,z)$ for any schematic points
$x,y$ of $G/G_n$ and $z$ of $X/G_n$, by using the fact that 
\[ \varphi_G \times \varphi_G \times \varphi_X : 
G \times G \times X \longrightarrow 
G/G_n \times G/G_n \times X/G_n \]
is an epimorphism (as follows from Lemma \ref{lem:prod}
and Remark \ref{rem:qi} again). Thus, $\beta$ is the desired 
action.
\end{proof}

\begin{lemma}\label{lem:lin}
Let $G$ be a connected affine algebraic group, $X$ a normal 
$G$-variety, and $L$ a line bundle on $X$. Then 
$L^{\otimes m }$ is $G$-linearizable for some positive
integer $m$ depending only on $G$.
\end{lemma}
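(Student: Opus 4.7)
The strategy is to reduce first to the case of smooth $G$ using the Frobenius-quotient constructions of Subsection~\ref{subsec:qi}, and then to settle the smooth case by a Mumford-type Picard argument.

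\emph{Reduction to smooth $G$.} If $\car(k) = 0$ then $G$ is already smooth, so assume $p := \car(k) > 0$ and choose $n$ so that $H := G/G_n$ is smooth, as permitted by \cite[VIIA.8.3]{SGA3}. By Lemma~\ref{lem:quot}(iv) the quotient $Y := X/G_n$ is a normal variety, and by Lemma~\ref{lem:equiv} it is an $H$-variety for which $\varphi := \varphi_{X,G_n} : X \to Y$ is equivariant relative to $\varphi_{G,G_n} : G \to H$. Lemma~\ref{lem:quot}(iii) furnishes a line bundle $M := \psi_{X,G_n}^*(L^{(p^n)})$ on $Y$ with $\varphi^* M \cong L^{\otimes p^n}$. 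Granted the smooth case with an exponent $m_0$ depending only on $H$, the $H$-linearization of $M^{\otimes m_0}$ pulls back through $(\varphi_{G,G_n}, \varphi)$ to a $G$-linearization of $L^{\otimes p^n m_0}$, so that $m := p^n m_0$ works and depends only on $G$.

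\emph{The smooth case.} Now assume $G$ is smooth, connected, affine. Consider the line bundle $N := \alpha^* L \otimes \pr_X^* L^{-1}$ on $G \times X$; its restriction to $\{e_G\} \times X$ is trivial. A $G$-linearization of $L$ is precisely a trivialization of $N$ which is the identity on $\{e_G\} \times X$ and satisfies a cocycle identity on $G \times G \times X$; the associativity of $\alpha$ already forces $N$ to satisfy that identity at the level of line bundles. The task therefore reduces to trivializing $N^{\otimes m}$ for some $m$ depending only on $G$. The key input is the classical finiteness of $\Pic(G_{\bar{k}})$ for smooth connected affine $G_{\bar{k}}$; let $m_1$ denote its exponent. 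The class $[N_{\bar{k}}|_{G_{\bar{k}} \times \{\bar{x}\}}] \in \Pic(G_{\bar{k}})$ depends locally constantly on $\bar{x}$, since the Picard scheme of a smooth connected affine group is \'etale, and vanishes on any geometric point of $\{e_G\} \times X_{\bar{k}}$. Working over $k_s$, where the components of $X_{k_s}$ are normal, geometrically integral, and each preserved by the connected group $G_{k_s}$ (via Example~\ref{ex:fp}(iii)), a seesaw-type argument then shows that $N_{k_s}^{\otimes m_1}$ is, up to a pullback from $X_{k_s}$, a pullback of an element of $\Pic(G_{k_s})$ of order dividing $m_1$; the pullback from $X_{k_s}$ is absorbed by a further bounded twist of $L$, and normalizing the resulting trivialization at $e_G$ yields the sought-for linearization.

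\emph{Main obstacle.} The technically subtle part is executing the smooth case over an arbitrary, possibly imperfect field: $X_{\bar{k}}$ may fail to be normal or integral, so the seesaw/rigidity argument must be performed over $k_s$ (where normality is preserved) and then descended to $k$. This descent is controlled by a Galois-cohomological obstruction which can be annihilated by a further integer depending only on $G$ (roughly, on the Galois action on $\Pic(G_{\bar{k}})$). Finite inseparable complications, which do not appear when passing from $k$ to $k_s$, can be handled by the norm and Weil restriction techniques of Lemma~\ref{lem:nwr}. The resulting exponent $m$ depends only on $G$ throughout, as required.
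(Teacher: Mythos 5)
Your reduction to the smooth case is exactly the paper's argument: choose $n$ with $G/G_n$ smooth, pass to the normal $G/G_n$-variety $X/G_n$ via Lemmas~\ref{lem:quot} and~\ref{lem:equiv}, pull back a linearization of $M^{\otimes m_0}$ along the equivariant pair $(\varphi_{G,G_n},\varphi_{X,G_n})$, and conclude with $m = p^n m_0$. The divergence is in the smooth case: the paper disposes of it in one line by citing \cite[Thm.~2.14]{Brion15}, whereas you attempt to sketch a proof. Since that citation carries all the real content of the lemma, your sketch has to be held to the standard of an actual argument, and as written it has two genuine gaps.

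First, the assertion that ``the Picard scheme of a smooth connected affine group is \'etale'' is not available: $G$ is not proper, so the Picard functor of $G$ is not representable in general, and you cannot argue that $\bar{x} \mapsto [N_{\bar{k}}|_{G_{\bar{k}}\times\{\bar{x}\}}]$ is locally constant this way. The correct tool here is the surjectivity of $\pr_G^*\otimes\pr_X^*: \Pic(G)\times\Pic(X) \to \Pic(G\times X)$ for $G$ smooth connected affine and $X$ normal and geometrically integral over an algebraically closed (or separably closed) field, combined with the finiteness of $\Pic(G_{\kb})$; the triviality of $N$ on $\{e_G\}\times X$ then forces $N \cong \pr_G^*(P)$ and $N^{\otimes m_1}$ trivial. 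Second, and more seriously, trivializing $N^{\otimes m_1}$ is not the same as linearizing $L^{\otimes m_1}$: a linearization is a \emph{choice} of isomorphism $\alpha^*L \cong \pr_X^*L$ satisfying the cocycle identity as an identity of isomorphisms, not merely of line bundles. After normalizing a trivialization along $\{e_G\}\times X$, the failure of the cocycle condition is a unit in $\cO(G\times G\times X)^*$, which must be analyzed via Rosenlicht's unit theorem (requiring geometric integrality hypotheses you would need to arrange over $k_s$ and then descend); killing the resulting $2$-cocycle obstruction is precisely where a further factor in $m$, still depending only on $G$, enters. Your sentence ``the associativity of $\alpha$ already forces $N$ to satisfy that identity at the level of line bundles'' is true but does not address this, and the concluding clause ``normalizing the resulting trivialization at $e_G$ yields the sought-for linearization'' skips the step entirely. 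If you intend to prove the smooth case rather than cite \cite{Brion15}, these two points must be supplied.
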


\begin{proof}
If $G$ is smooth, then the assertion is that of 
\cite[Thm.~2.14]{Brion15}. For an arbitrary $G$, we may choose 
a positive integer $n$ such that $G/G_n$ is smooth. 
In view of Lemmas \ref{lem:quot} and \ref{lem:equiv}, 
the categorical quotient $X/G_n$ is a normal $G/G_n$-variety equipped 
with a $G$-equivariant morphism $\varphi : X \to X/G_n$ and with a line
bundle $M$ such that $\varphi^*(M) \cong L^{\otimes p^n}$. The line
bundle $M^{\otimes m}$ is $G/G_n$-linearizable for some positive
integer $m$, and hence $L^{\otimes p^n m}$ is $G$-linearizable.
\end{proof}

\subsection{$G$-quasi-projectivity}
\label{subsec:Gqp}

We say that a $G$-scheme $X$ is \emph{$G$-quasi-projective} 
if it admits an ample $G$-linearized line bundle; equivalently,
$X$ admits an equivariant immersion in the projectivization
of a finite-dimensional $G$-module. If in addition the $G$-action
on $X$ is almost faithful, then $G$ must be affine, since it acts
almost faithfully on a projective space.

By the next lemma, being $G$-quasi-projective is invariant 
under field extensions (this fact should be well-known, but 
we could not locate any reference):

\begin{lemma}\label{lem:field}
Let $G$ be an algebraic $k$-group, $X$ a $G$-scheme over $k$, 
and $k'/k$ a field extension. Then $X$ is $G$-quasi-projective
if and only if $X_{k'}$ is $G_{k'}$-quasi-projective.
\end{lemma}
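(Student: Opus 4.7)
The \textbf{only if} direction is immediate: if $L$ is an ample $G$-linearized line bundle on $X$, then Example~\ref{ex:fp}(i) equips $L_{k'}$ with a $G_{k'}$-linearization on $X_{k'}$, and ampleness is preserved under base change of the ground field.

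For the \textbf{if} direction, I would assume that $X_{k'}$ carries a $G_{k'}$-equivariant immersion $\iota' : X_{k'} \hookrightarrow \bP(V)$ for some finite-dimensional $G_{k'}$-module $V$, and proceed by three successive reductions on the extension $k'/k$.

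\emph{Step 1: Reduction to $k'/k$ finitely generated.} Write $k'$ as a filtered colimit of its finitely generated subextensions $k_\lambda / k$. Since $X$ is of finite type and the datum consisting of $V$, the linear $G_{k'}$-action on $V$, and the immersion $\iota'$ is of finite presentation over $k'$, the standard limit machinery of EGA~IV.8 descends all this structure to some $k_\lambda$, yielding a $G_{k_\lambda}$-module $V_\lambda$ with $V_\lambda \otimes_{k_\lambda} k' \cong V$ and a $G_{k_\lambda}$-equivariant morphism $X_{k_\lambda} \to \bP(V_\lambda)$ whose base change is $\iota'$. Since being an immersion descends along faithfully flat extensions, this morphism is itself an immersion, so we may replace $k'$ by $k_\lambda$ and assume $k'/k$ is finitely generated.

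\emph{Step 2: Reduction to $k'/k$ finite.} Now write $k' = \Frac(A)$ for some finitely generated integral $k$-algebra $A$. A further spreading-out gives a finitely generated free $A$-module $V_A$ with a linear $G_A$-action and a $G_A$-equivariant morphism $\iota_A : X_A \to \bP(V_A)$ whose generic fiber is $\iota'$. The locus of $s \in \Spec A$ for which the fiber $\iota_A \otimes_A \kappa(s)$ is an immersion is constructible (EGA~IV.9.6) and contains the generic point; since $\Spec A$ is Jacobson, this locus contains a closed point $s$. The residue field $\kappa(s)$ is a finite extension of $k$, and the fiber at $s$ is a $G_{\kappa(s)}$-equivariant immersion $X_{\kappa(s)} \hookrightarrow \bP(V_{\kappa(s)})$, reducing us to the case of a finite extension.

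\emph{Step 3: Finite extensions.} For $k'/k$ finite, take $L'$ an ample $G_{k'}$-linearized line bundle on $X_{k'}$; by Lemma~\ref{lem:nwr}(iii) its norm $\N(L')$ is a $G$-linearized line bundle on $X$. To check that it is ample, pass to a Galois closure $K$ of $k'/k$: after base change to $K$, the pullback of $\N(L')$ to $X_K$ becomes isomorphic to the tensor product of the Galois conjugates of $L'_K$, each of which is ample, hence $\N(L')_K$ is ample. Since $X_K \to X$ is finite and faithfully flat, ampleness descends to $\N(L')$ on $X$.

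The \textbf{main obstacle} is Step~2: one must spread out, simultaneously and compatibly, the module $V$, the linear $G_{k'}$-action on it, and the equivariant immersion $\iota'$, and then locate a closed point at which the fiber remains an immersion. The spreading out is routine but fiddly; the genuine subtlety is that the ``fiberwise immersion'' locus in $\Spec A$ need not be open, only constructible, which is why the Jacobson property of finitely generated $k$-algebras is crucial for producing a closed point in that locus. Once this is in place, the specialization step is automatic (immersions and equivariance are stable under arbitrary base change), and the finite case is handled cleanly by the norm construction of Lemma~\ref{lem:nwr}.
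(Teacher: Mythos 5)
Your overall strategy coincides with the paper's: reduce by a specialization/spreading-out argument to a finite extension $K/k$, then descend to $k$ via the norm of Lemma~\ref{lem:nwr}. The paper phrases Steps 1--2 in terms of the ample $G$-linearized line bundle itself (first descending $L'$ to a finitely generated subextension $k''$, with ampleness preserved by \cite[VIII.5.8]{SGA1}, then to a finitely generated $k$-algebra $R \subset k''$, and finally restricting to the fiber over a maximal ideal), which sidesteps your constructibility-plus-Jacobson detour for the ``fiberwise immersion'' locus; but your version of these two steps is also workable.

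There is, however, one genuine gap, in Step 3. Your verification that $\N(L')$ is ample passes to a Galois closure $K$ of $k'/k$ and identifies $\N(L')_K$ with the tensor product of the Galois conjugates of $L'_K$; this uses $k' \otimes_k K \cong \prod_{\sigma} K$, which holds only when $k'/k$ is \emph{separable}. The finite extension produced by your Step 2 is the residue field of a closed point of $\Spec A$, which over an imperfect field need not be separable over $k$ --- and imperfect base fields are precisely the setting this paper cares about. For a purely inseparable factor the conjugate-product description of the norm breaks down (the relevant Artin ring $k' \otimes_k k'$ is local and non-reduced), so your ampleness check is incomplete there. The fix is to invoke the general fact that the norm of an ample invertible sheaf along a finite locally free surjection is ample, i.e., \cite[II.6.6.2]{EGA}, which is exactly what the paper cites; with that substitution your argument closes up.
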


\begin{proof}
Assume that $X$ has an ample $G$-linearized line bundle $L$.
Then $L_{k'}$ is an ample line bundle on $X_{k'}$
(see \cite[II.4.6.13]{EGA}), and is $G_{k'}$-linearized 
by Example \ref{ex:fp} (i).

For the converse, we adapt a classical specialization argument 
(see \cite[IV.9.1]{EGA}). Assume that $X_{k'}$ has an ample
$G_{k'}$-linearized line bundle $L'$. Then there exists
a finitely generated subextension $k''/k$ of $k'/k$ and a line bundle 
$L''$ on $X_{k''}$ such that $L' \cong L'' \otimes_{k''} k'$; moreover,
$L''$ is ample in view of \cite[VIII.5.8]{SGA1}. 
We may further assume (possibly by enlarging $k''$) 
that $L''$ is $G_{k''}$-linearized. Next, there exists a finitely
generated $k$-algebra $R \subset k''$ and an ample line bundle 
$M$ on $X_R$ such that $L'' \cong M \otimes_R k''$ and 
$M$ is $G_R$-linearized. Choose a maximal ideal 
$\fm \subset R$, with quotient field $K := R/\fm$. Then $K$ is a finite
extension of $k$; moreover, $X_K$ is equipped with an ample
$G_K$-linearized line bundle $M_K := M \otimes_R K$. Consider 
the norm $L := \N(M_K)$; then $L$ is an ample line bundle on $X$ 
in view of \cite[II.6.6.2]{EGA}. Also, $L$ is equipped with a 
$G$-linearization by Lemma \ref{lem:nwr}.
\end{proof}

Also, $G$-quasi-projectivity is invariant under Frobenius twists
(Lemma \ref{lem:ifm}) and quotients by infinitesimal group
schemes (Lemmas \ref{lem:quot} and \ref{lem:equiv}). We will
obtain a further invariance property of quasi-projectivity
(Proposition \ref{prop:GH}). For this, we need some preliminary 
notions and results.

Let $G$ be an algebraic group, $H \subset G$ a subgroup scheme,
and $Y$ an $H$-scheme. The \emph{associated fiber bundle} 
is a $G$-scheme $X$ equipped with a $G \times H$-equivariant 
morphism $\varphi : G \times Y \to X$ such that the square  
\[
\xymatrix{
G \times Y \ar[r]^-{\pr_G} \ar[d]_{\varphi} & G \ar[d]_f \\
X \ar[r]^-{\psi} & G/H, \\
}
\]
is cartesian, where $f$ denotes the quotient morphism, 
$\pr_G$ the projection, and $G \times H$ acts on $G \times Y$ via 
$(g,h) \cdot (g',y) = (gg' h^{-1}, h \cdot y)$ for any
scheme $S$ and any $g,g' \in G(S)$, $h \in H(S)$, $y \in Y(S)$. 
Then $\varphi$ is an $H$-torsor, since so is $f$. Thus, the triple 
$(X,\varphi,\psi)$ is uniquely determined; we will denote $X$ by
$G \times^H Y$. Also, note that $\psi$ is faithfully flat and 
$G$-equivariant; its fiber at the base point $f(e_G) \in (G/H)(k)$ 
is isomorphic to $Y$ as an $H$-scheme. 

Conversely, if $X$ is a $G$-scheme equipped with an equivariant
morphism $\psi: X \to G/H$, then $X = G \times^H Y$, where
$Y$ denotes the fiber of $\psi$ at the base point of $G/H$. 
Indeed, form the cartesian square
\[
\xymatrix{
X' \ar[r]^-{\eta} \ar[d]_{\varphi} & G \ar[d]_f \\
X \ar[r]^-{\psi} & G/H. \\
}
\]
Then $X'$ is a $G$-scheme, and $\eta$ an equivariant morphism
for the $G$-action on itself by left multiplication.
Moreover, we may identify $Y$ with the fiber of $\eta$ 
at $e_G$. Then $X'$ is equivariantly isomorphic to $G \times Y$
via the maps 
$G \times Y \to X'$, $(g,y) \mapsto g \cdot y$ and 
$X' \to G \times Y$, 
$z \mapsto (\psi'(z), \psi'(z)^{-1} \cdot z)$, and this
identifies $\eta$ with $\pr_G : G \times Y \to G$.

The associated fiber bundle need not exist in general, as follows 
from Hironaka's example mentioned in the introduction (see 
\cite[p.~367]{BB} for details). But it does exist when the $H$-action 
on $Y$ extends to a $G$-action $\alpha : G \times Y \to Y$: 
just take $X = G/H \times Y$ equipped with the diagonal action 
of $G$ and with the maps 
\[ f \times \alpha : G \times Y \longrightarrow G/H \times Y, 
\quad \pr_{G/H} : G/H \times Y \longrightarrow G/H. \]
A further instance in which the associated fiber bundle exists
is given by the following result, which follows from 
\cite[Prop.~7.1]{MFK}:

\begin{lemma}\label{lem:ass}
Let $G$ be an algebraic group, $H \subset G$ a subgroup scheme,
and $Y$ an $H$-scheme equipped with an ample $H$-linearized
line bundle $M$. Then the associated fiber bundles 
$X := G \times^H Y$ and $L := G \times^H M$ exist. Moreover,
$L$ is a $G$-linearized line bundle on $X$, and is ample relative
to $\psi$. In particular, $X$ is quasi-projective.
\end{lemma}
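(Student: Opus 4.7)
The plan is to construct the associated fiber bundle as a geometric quotient of $G \times Y$ by a free $H$-action, with the descent of a relatively ample linearized line bundle supplied by \cite[Prop.~7.1]{MFK}, which is the only non-formal input.

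First, I would equip $G \times Y$ with the $H$-action given on $S$-points by $h \cdot (g,y) = (g h^{-1}, h \cdot y)$. This action is free, since $H$ already acts freely on $G$ by right translations, and the projection $\pr_G : G \times Y \to G$ is $H$-equivariant (with $H$ acting on $G$ by right multiplication); it is the pullback of the $H$-torsor $f : G \to G/H$ along $f$ itself. The line bundle $\cN := \pr_Y^*(M)$ inherits an $H$-linearization from the given one on $M$, since $\pr_Y$ is $H$-equivariant for the trivial action on the $G$-factor. Because the restriction of $\cN$ to each fiber of $\pr_G$ is isomorphic to the ample line bundle $M$ on $Y$, the bundle $\cN$ is ample relative to $\pr_G$.

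Next, I would invoke \cite[Prop.~7.1]{MFK} applied to the $H$-scheme $G \times Y$ together with its $H$-linearized, $\pr_G$-ample line bundle $\cN$. The proposition yields a geometric quotient $\varphi : G \times Y \to X$, which is an $H$-torsor, together with a line bundle $L$ on $X$ satisfying $\varphi^*(L) \cong \cN$ as $H$-linearized line bundles, and the statement that $L$ is ample relative to the induced morphism $\psi : X \to G/H$. By the universal property of $\varphi$, the morphism $\psi$ fits in the cartesian square required by the definition of the associated bundle, and its fiber over the base point $f(e_G)$ is canonically identified with $Y$ as an $H$-scheme. This realizes $X = G \times^H Y$ and $L = G \times^H M$.

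Then, I would propagate the $G$-structure. Left translation by $G$ on the first factor of $G \times Y$ commutes with the $H$-action above, preserves $\cN$, and lifts the $H$-linearization trivially; it therefore descends to a $G$-action on $X$ and a $G$-linearization of $L$, making $\psi$ equivariant and $\varphi$ equivariant for the $G \times H$-action. For quasi-projectivity, recall that $G/H$ is quasi-projective (a standard property of homogeneous spaces of algebraic groups), hence carries an ample line bundle $\cA$. Since $\psi$ is of finite type and $L$ is $\psi$-ample, the line bundle $L \otimes \psi^*(\cA)^{\otimes n}$ is ample on $X$ for $n \gg 0$, so $X$ is quasi-projective. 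The only substantive step is the appeal to \cite[Prop.~7.1]{MFK}: that a free $H$-action carrying a relatively ample linearized line bundle admits a geometric quotient in the category of schemes and that the linearized line bundle descends with preservation of relative ampleness. Every other step is a formal manipulation of commuting actions and of linearized line bundles.
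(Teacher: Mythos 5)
Your proposal is correct and follows essentially the same route as the paper: the paper's entire proof of this lemma is the citation of \cite[Prop.~7.1]{MFK}, and your argument simply spells out how that proposition is applied (forming the free $H$-action on $G \times Y$, pulling back $M$ to a relatively ample $H$-linearized bundle, descending it along the torsor $G \times Y \to X$, and then deducing quasi-projectivity of $X$ from the $\psi$-ampleness of $L$ together with the quasi-projectivity of $G/H$). The added details are all standard and accurate, so there is nothing to object to.
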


In particular, the associated fiber bundle $G \times^H V$ exists
for any finite-dimensional $H$-module $V$, viewed as an
affine space. Then $G\times^H V$ is a $G$-linearized vector
bundle on $G/H$, called the \emph{homogeneous vector bundle}
associated with the $H$-module $V$. 

We now come to a key technical result:

\begin{proposition}\label{prop:GH}
Let $G$ be an algebraic group, $H \subset G$ a subgroup scheme 
such that $G/H$ is finite, and $X$ a $G$-scheme. If $X$ is 
$H$-quasi-projective, then it is $G$-quasi-projective as well.
\end{proposition}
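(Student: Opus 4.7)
The plan is to produce an ample $G$-linearized line bundle on $X$ by combining the associated fiber bundle construction with the norm along a finite flat $G$-equivariant projection.

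Since the $H$-action on $X$ is the restriction of the ambient $G$-action, the associated fiber bundle $Y := G \times^H X$ exists and admits a canonical $G$-equivariant isomorphism $Y \cong G/H \times_k X$ (with diagonal $G$-action), as recalled before Lemma \ref{lem:ass}. Applying Lemma \ref{lem:ass} to the ample $H$-linearized line bundle $L$ yields a $G$-linearized line bundle $M := G \times^H L$ on $Y$, ample relative to the projection $\psi : Y \to G/H$. Since $G/H$ is finite, hence affine over $k$, relative ampleness implies absolute ampleness (\cite[II.4.6.6]{EGA}), so $M$ is ample on $Y$.

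Identifying $Y$ with $G/H \times_k X$, the second projection $\alpha := \pr_X : Y \to X$ is $G$-equivariant for the diagonal action and is the base change along $X \to \Spec(k)$ of the finite flat morphism $G/H \to \Spec(k)$; hence $\alpha$ is finite and flat. The norm construction then produces a line bundle $\N := \N_\alpha(M)$ on $X$, and by the same reference invoked in the proof of Lemma \ref{lem:field}, namely \cite[II.6.6.2]{EGA}, $\N$ is ample because $M$ is.

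It remains to endow $\N$ with a $G$-linearization. This follows from functoriality of the norm under base change: the $G$-equivariance of $\alpha$ and the $G$-linearization of $M$ together produce a canonical isomorphism between the two pullbacks of $\N$ to $G \times X$ along the action map and the second projection, and this isomorphism satisfies the cocycle condition on $G \times G \times X$. This step is a direct extension of Lemma \ref{lem:nwr}(iii) from norms along finite field extensions to norms along general finite flat morphisms, and is the main technical point of the proof. It can be verified either by a diagram chase via local trivializations (as in the proof of Lemma \ref{lem:nwr}(ii)) or via the intrinsic description $\N_\alpha(M) \cong \det_{\cO_X}(\alpha_* M) \otimes \det_{\cO_X}(\alpha_* \cO_Y)^{-1}$. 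Once this is in place, $\N$ is an ample $G$-linearized line bundle on $X$, completing the proof.
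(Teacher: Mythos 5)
Your proof is correct, and while it shares the paper's starting point, it diverges in a genuinely useful way in the second half. Like the paper, you form $G \times^H X \cong G/H \times X$ via Lemma \ref{lem:ass} and descend the $G$-linearized ample bundle $M$ along the finite projection $\pr_X$ to $X$. But the paper first reduces to the case where $G$ is \emph{smooth} (via Frobenius kernels in positive characteristic), precisely so that $G/H$ becomes \'etale and $\pr_X$ becomes finite \'etale; it then takes $\det((\pr_X)_* M)$ and proves ampleness by passing to $\bar k$ and decomposing $G/H \times X$ into $[G:H]$ copies of $X$, so that the determinant becomes $\bigotimes_i g_i^*(M)$. Your route replaces all of this by the norm along the finite locally free morphism $\pr_X$ and the ampleness theorem for norms (\cite[II.6.6.2]{EGA}, the same result the paper uses in Lemma \ref{lem:field}), which needs no reducedness of $G/H$ and hence no smoothness reduction; this is a real simplification. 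The one point you rightly flag as the technical crux --- the $G$-linearization of $\N_\alpha(M)$ --- does go through: the square formed by the action maps on $G \times Y$ and $G \times X$ over $\alpha$ is cartesian (because $\alpha$ is equivariant and $G$ acts by automorphisms), so the base-change compatibility of the norm (\cite[II.6.5]{EGA}) transports the linearization isomorphism $\beta_Y^*M \cong \pr_Y^*M$ to one for $\N_\alpha(M)$, and the cocycle condition follows by the same functoriality on $G \times G \times X$; alternatively your formula $\N_\alpha(M) \cong \det(\alpha_*M)\otimes\det(\alpha_*\cO_Y)^{-1}$ reduces the claim to the linearization of $\det(\alpha_*M)$, which is exactly the object the paper works with. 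Either verification should be written out if this argument is to stand on its own, since it is the only step not literally covered by the lemmas already in the paper.
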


\begin{proof}
We first reduce to the case where \emph{$G$ is smooth}. 
For this, we may assume that $\car(k) = p > 0$. 
Choose a positive integer $n$ such that $G/G_n$ is smooth;
then we may identify $H/H_n$ with a subgroup scheme of
$G/G_n$, and the quotient $(G/G_n)/(H/H_n)$ is finite. 
By Lemma \ref{lem:ifm}, $X^{(p^n)}$ is a $G/G_n$-scheme 
of finite type and admits an ample $H/H_n$-linearized line bundle. 
If $X^{(p^n)}$ admits an ample $G/G_n$-linearized line bundle $M$, 
then $(F^n_{X/k})^*(M)$ is an ample $G$-linearized line bundle on $X$,
in view of Lemma \ref{lem:ifm} again. This yields the desired reduction.

Next, let $M$ be an ample $H$-linearized line bundle on $X$. 
By Lemma \ref{lem:ass}, the associated fiber bundle 
$G \times^H X = G/H \times X$ is equipped with the $G$-linearized 
line bundle $L := G \times^H M$.  The projection 
$\pr_X : G/H \times X \to X$ is finite, \'etale of degree 
$n := [G:H]$, and $G$-equivariant. As a consequence, 
$E := (\pr_X)_*(L)$ is a $G$-linearized vector bundle of degree 
$n$ on $X$; thus, $\det(E)$ is $G$-linearized as well. To complete 
the proof, it suffices to show that $\det(E)$ is ample.

For this, we may assume that $k$ is algebraically closed
by using \cite[VIII.5.8]{SGA1} again. Then there exist lifts 
$e = g_1, \ldots, g_n \in G(k)$ of the distinct $k$-points of $G/H$. 
This identifies $G/H \times X$ with the disjoint union of 
$n$ copies of $X$; the pull-back of $\pr_X$ to the $i$th copy is 
the identity of $X$, and the pull-back of $L$ is $g_i^*(M)$. Thus, 
$E \cong \oplus_{i = 1}^n g_i^*(M)$, and hence
$\det(E) \cong \otimes_{i=1}^n g_i^*(M)$ is ample indeed.
\end{proof}

\begin{remark}\label{rem:GH}
Given $G$, $H$, $X$ as in Proposition \ref{prop:GH} and an
$H$-linearized ample line bundle $M$ on $X$, it may well happen
that no non-zero tensor power of $M$ is $G$-linearizable. This holds 
for example when $G$ is the constant group of order $2$ acting on 
$X = \bP^1 \times \bP^1$ by exchanging both factors, $H$ is trivial, 
and $M$ has bi-degree $(m_1,m_2)$ with $m_1 > m_2 \ge 1$. 
\end{remark}

\begin{corollary}\label{cor:GH}
Let $G$ be an affine algebraic group, and $X$ a normal 
quasi-projective $G$-variety. Then $X$ is $G$-quasi-projective.
\end{corollary}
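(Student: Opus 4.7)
The plan is to reduce the general case (arbitrary affine $G$) to the connected case handled by Lemma \ref{lem:lin}, using Proposition \ref{prop:GH} to pass from the identity component up to $G$.

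First, I would introduce the identity component. Let $H := G^0$ denote the neutral component of $G$, so that $H$ is a closed normal subgroup scheme of $G$ by Example \ref{ex:fp} (iii), and the quotient $G/H \cong \pi_0(G)$ is finite \'etale since $G$ is algebraic. As a closed subscheme of the affine group $G$, the group $H$ is itself affine; moreover it is a \emph{connected} affine algebraic group. Restricting the $G$-action, we view $X$ as a normal $H$-variety that is still quasi-projective over $k$.

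Next I would manufacture an $H$-linearized ample line bundle on $X$. Since $X$ is quasi-projective, choose any ample line bundle $L$ on $X$. Because $H$ is a connected affine algebraic group and $X$ is a normal $H$-variety, Lemma \ref{lem:lin} provides a positive integer $m$ such that $L^{\otimes m}$ is $H$-linearizable. The line bundle $L^{\otimes m}$ remains ample (positive tensor powers of an ample line bundle are ample), so $X$ is $H$-quasi-projective in the sense of \S\ref{subsec:Gqp}.

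Finally, I would invoke Proposition \ref{prop:GH} with the subgroup scheme $H \subset G$: since $G/H$ is finite and $X$ is $H$-quasi-projective, the proposition yields an ample $G$-linearized line bundle on $X$, i.e., $X$ is $G$-quasi-projective, as desired.

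There is no serious obstacle here: the corollary is essentially a packaging of the two main technical inputs already developed. All the real work is hidden in Lemma \ref{lem:lin} (which, for non-smooth $G$, passes through Frobenius kernels to reduce to the smooth case treated in \cite[Thm.~2.14]{Brion15}) and in Proposition \ref{prop:GH} (whose proof combines Frobenius twists with an associated-fiber-bundle construction and a determinant argument). The only thing worth double-checking is that the reduction to $H = G^0$ genuinely requires no connectedness or smoothness hypothesis on $G$ itself, which is guaranteed because $G^0$ is automatically connected and the quotient $G/G^0$ is automatically finite for any algebraic group $G$.
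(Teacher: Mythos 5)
Your proposal is correct and follows exactly the paper's own argument: choose an ample $L$, apply Lemma \ref{lem:lin} to the neutral component $G^0$ to linearize a positive power, and then invoke Proposition \ref{prop:GH} with $H = G^0$ and $G/G^0$ finite. The paper's proof is just a terser version of the same two-step reduction.
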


\begin{proof}
Choose an ample line bundle $L$ on $X$. By Lemma \ref{lem:lin}, 
some positive power of $L$ admits a $G^0$-linearization. 
This yields the assertion in view of Proposition \ref{prop:GH}. 
\end{proof}

\section{Proofs of the main results}
\label{subsec:pr}

\subsection{The theorem of the square}
\label{subsec:ts}

Let $G$ be a group scheme with multiplication map 
$\mu$, and $X$ a $G$-scheme with action map $\alpha$.
For any line bundle $L$ on $X$, denote by $L_G$ the 
line bundle on $G \times X$ defined by
\[ L_G := \alpha^*(L) \otimes \pr_X^*(L)^{-1},\]
where $\pr_X : G \times X \to X$ stands for the projection.
Next, denote by $L_{G \times G}$ the line bundle on 
$G \times G \times X$ defined by
\[ L_{G \times G} := (\mu \times \id_X)^*(L_G)
\otimes (\pr_1 \times \id_X)^*(L_G)^{-1} 
\otimes (\pr_2 \times \id_X)^*(L_G)^{-1}, \]
where $\pr_1, \pr_2 : G \times G \to G$ denote the two projections. 
Then $L$ is said to \emph{satisfy the theorem of the square} 
if there exists a line bundle $M$ on $G \times G$ such that 
\[ L_{G \times G} \cong \pr_{G \times G}^*(M), \]
where $\pr_{G \times G} : G \times G \times X \to G \times G$
denotes the projection.

By \cite[p.~159]{BLR}, $L$ satisfies the theorem of the square 
if and only if the polarization morphism
\[ G \longrightarrow \Pic_X, \quad 
g \longmapsto g^*(L) \otimes L^{-1} \]
is a homomorphism of group functors, where $\Pic_X$
denotes the Picard functor that assigns with any scheme 
$S$, the commutative group $\Pic(X \times S)/\pr_S^*\Pic(S)$. 
In particular, the line bundle
$(gh)^*(L) \otimes g^*(L)^{-1} \otimes h^*(L)^{-1} \otimes L$
is trivial for any $g,h \in G(k)$; this is the original formulation 
of the theorem of the square. 

\begin{proposition}\label{prop:ts}
Let $G$ be a connected algebraic group, $X$ a normal, geometrically
irreducible $G$-variety, and $L$ a line bundle on $X$. 
Then $L^{\otimes m}$ satisfies the theorem of the square
for some positive integer $m$ depending only on $G$.
\end{proposition}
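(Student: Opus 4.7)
The plan is to reduce to the case where $G$ is smooth connected over an algebraically closed field, apply Chevalley's structure theorem to separate an affine normal subgroup from an abelian quotient, linearize on the affine part using Lemma \ref{lem:lin}, and conclude via the theorem of the cube on a line bundle over $A\times A\times X$.

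\textbf{Reductions.} First, assume $\car(k)=p>0$ and choose $n\geq 1$ so that $G/G_n$ is smooth. By Lemmas \ref{lem:quot} and \ref{lem:equiv}, the quotient $X/G_n$ is a normal $G/G_n$-variety, still geometrically irreducible because $\varphi_{X,G_n}$ is a universal homeomorphism whose formation commutes with field extensions; moreover, $L^{\otimes p^n}$ is the pullback of some line bundle $M$ on $X/G_n$. A diagram chase gives $L^{\otimes p^n}_{G\times G}\cong(\varphi_G\times\varphi_G\times\varphi_{X,G_n})^*M_{(G/G_n)\times(G/G_n)}$, so the theorem of the square for $M^{\otimes m'}$ under $G/G_n$ implies the theorem of the square for $L^{\otimes p^n m'}$ under $G$; this reduces to $G$ smooth. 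Next I would base change to $\bar k$: reformulating the theorem of the square as the statement that $\phi_L\circ\mu_G$ and $+\circ(\phi_L\times\phi_L)$ agree as morphisms of functors $G\times G\to\bPic_X$, and observing that such an equality descends along $k\to\bar k$ by fpqc descent, reduces us to $k=\bar k$ with the same integer $m$.

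\textbf{Chevalley decomposition and descent along $G\to A$.} Chevalley's theorem now provides $1\to H\to G\to A\to 1$ with $H$ smooth connected affine and $A$ an abelian variety. Lemma \ref{lem:lin} gives $m_1$, depending only on $H$, such that $L^{\otimes m_1}$ is $H$-linearized, so we may replace $L$ by $L^{\otimes m_1}$ and assume $L$ itself is $H$-linearized. Then the line bundle $L_G=\alpha^*L\otimes\pr_X^*L^{-1}$ on $G\times X$ acquires a canonical $H$-linearization for the $H$-action by left multiplication on the $G$-factor (using $\alpha(hg,x)=h\cdot\alpha(g,x)$ together with $h^*L\cong L$). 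Since $\pi\times\id_X:G\times X\to A\times X$ is an $H$-torsor, $L_G$ descends to a line bundle $L'$ on $A\times X$, and the canonical trivialization of $L_G|_{\{e\}\times X}$ descends to a trivialization of $L'|_{\{0\}\times X}$. Consequently, $L_{G\times G}$ is the pullback of
\[
L''=(\mu_A\times\id_X)^*L'\otimes(\pr_{1,3})^*(L')^{-1}\otimes(\pr_{2,3})^*(L')^{-1}
\]
from $A\times A\times X$, and the theorem of the square for $L$ on $X$ reduces to the statement that $L''$ is pulled back from $A\times A$.

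\textbf{Theorem of the cube.} From the triviality of $L'|_{\{0\}\times X}$ a direct calculation gives triviality of $L''$ on $\{0\}\times A\times X$ and on $A\times\{0\}\times X$. Picking $x_0\in X(k)$ (available since $k=\bar k$) and replacing $L''$ with $L''\otimes\pr_{A\times A}^*(L''|_{A\times A\times\{x_0\}})^{-1}$, we also trivialize it on $A\times A\times\{x_0\}$. Since both $A$-factors are proper and geometrically integral, the theorem of the cube forces triviality on the whole of $A\times A\times X$, whence $L''\cong\pr_{A\times A}^*(L''|_{A\times A\times\{x_0\}})$ as desired. Unraveling the reductions, the integer $m=p^n\cdot m_1$ depends only on $G$.

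\textbf{Main obstacle.} The crucial technical step is the $H$-equivariant descent of $L_G$ (and of $L_{G\times G}$) along the $H$-torsor $G\to A$, since this is what converts a problem about $G$ into one about the abelian variety $A$ where the theorem of the cube is available; a secondary subtlety is ensuring that the reduction to $k=\bar k$ is valid despite the possible failure of Chevalley's theorem over imperfect fields, which I handle by first base changing to $\bar k$ and descending the resulting equality of polarization homomorphisms back to $k$.
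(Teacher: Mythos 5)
Your argument from the Chevalley decomposition onward (descent of $L_G$ along the $H$-torsor $G\times X\to A\times X$, then the theorem of the cube on $A\times A\times X$) is exactly the paper's strategy, and your preliminary reduction to smooth $G$ via Frobenius kernels is correct though unnecessary: the paper invokes Raynaud's generalization of Chevalley's theorem, which produces $1\to H\to G\to A\to 1$ with $H$ affine and connected (not necessarily smooth) over \emph{any} field, and Lemma \ref{lem:lin} already covers such $H$. The genuine gap is your reduction to $k=\bar k$. The theorem of the square, as defined here, is the existence of a line bundle $M$ on $G\times G$ with $L_{G\times G}\cong \pr_{G\times G}^*(M)$, equivalently an equality in $\Pic(G\times G\times X)/\pr_{G\times G}^*\Pic(G\times G)$; the functor $S\mapsto \Pic(X\times S)/\pr_S^*\Pic(S)$ is not an fpqc sheaf (it is not even separated) when $X$ is merely quasi-projective, so equality of the two morphisms of functors after base change to $\bar k$ does not formally descend. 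Concretely, knowing $(L_{G\times G})_{\bar k}\cong\pr^*(M')$ gives no descent datum on $M'$ (it is only defined up to the kernel of $\pr^*$), and the seesaw-type argument that would canonically produce $M$ over $k$ from fibrewise information requires $X$ proper, which fails here. A norm along a finite extension $k'/k$ over which $M'$ is defined would only yield the theorem of the square for $L^{\otimes m[k':k]}$, with $k'$ depending on $X$ and $L$ — destroying the required dependence of $m$ on $G$ alone.

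The paper circumvents this by never leaving the base field: besides using Raynaud's structure theorem over $k$, it replaces the classical cube by Lemma \ref{lem:cube}, a version requiring only $k$-rational points on the two proper factors (here $e_A$) and \emph{geometric connectedness} of the third factor $Z=X$, with no rational point or trivialization along $A\times A\times x_0$ needed. Its proof shows that the kernel of $\pr_X^*\times\pr_Y^*:\Pic_{X\times Y}\to\Pic_X\times\Pic_Y$ is \'etale (by computing Lie algebras via the K\"unneth formula), so the classifying morphism $Z\to\Pic_{A\times A}$ determined by the two rigidifications of $L_{A\times A}$ factors through a $k$-rational point. If you want to salvage your outline, you should replace the passage to $\bar k$ and the classical cube by this rigidified, rational-point-free version.
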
 

\begin{proof}
By a generalization of Chevalley's structure theorem due 
to Raynaud (see \cite[IX.2.7]{Raynaud} and also 
\cite[9.2 Thm.~1]{BLR}), 
there exists an exact sequence of algebraic groups
\[ 1 \longrightarrow H \longrightarrow G 
\stackrel{f}{\longrightarrow} A \longrightarrow 1, \]
where $H$ is affine and connected, and $A$ is an abelian variety. 
(If $G$ is smooth, then there exists a smallest such subgroup scheme 
$H = H(G)$; if in addition $k$ is perfect, then $H(G)$ is smooth as well). 
We choose such a subgroup scheme $H \triangleleft G$.

In view of Lemma \ref{lem:lin}, there exists a positive integer $m$
such that $L^{\otimes m}$ is $H$-linearizable. Replacing $L$ with
$L^{\otimes m}$, we may thus assume that $L$ is equipped with an
$H$-linearization. Then $L_G$ is also $H$-linearized for the
action of $H$ on $G \times X$ by left multiplication on $G$,
since $\alpha$ is $G$-equivariant for that action, and 
$\pr_X$ is $G$-invariant. As the map
$f \times \id_X : G \times X \to A \times X$ is an $H$-torsor
relative to the above action, there exists a line bundle
$L_A$ on $A \times X$, unique up to isomorphism, such that
\[ L_G = (f \times \id_X)^*(L_A) \]
(see \cite[p.~32]{MFK}). The diagram
\[ 
\xymatrixcolsep{4pc}\xymatrix{
G \times G \times X \ar[r]^-{\mu_G \times \id_X} \ar[d]_{f \times f \times \id_X} 
& G \times X \ar[d]^{f \times \id_X} \\ 
A \times A \times X \ar[r]^-{\mu_A\times \id_X} & A \times X  \\}
\]  
commutes, since $f$ is a morphism of algebraic groups; thus,
\[ (\mu_G \times \id_X)^*(L_G) \cong 
(f \times f \times \id_X)^* (\mu_A \times \id_X)^*(L_A). \]
Also, for $i = 1,2$, the diagrams
\[ 
\xymatrixcolsep{4pc}\xymatrix{
G \times G \times X \ar[r]^-{\pr_i \times \id_X} \ar[d]_{f \times f \times \id_X} 
& G \times X \ar[d]^{f \times \id_X} \\ 
A \times A \times X \ar[r]^-{\pr_i \times \id_X} & A \times X  \\}
\]  
commute as well, and hence 
\[ (\pr_i \times \id_X)^*(L_G) \cong 
(f \times f \times \id_X)^* (\pr_i \times \id_X)^*(L_A). \]
This yields an isomorphism
\[ L_{G \times G} \cong (f \times f \times \id_X)^*(L_{A \times A}), \]
where we set
\[ L_{A \times A} := (\mu_A \times \id_X)^*(L_A)
\otimes (\pr_1 \times \id_X)^*(L_A)^{-1} 
\otimes (\pr_2 \times \id_X)^*(L_A)^{-1}. \]

Note that the line bundle $L_A$ on $A \times X$ is equipped
with a rigidification along $e_A \times X$, i.e., with an isomorphism
\[ \cO_X \stackrel{\cong}{\longrightarrow} 
(e_A \times \id_X)^*(L_A). \]
Indeed, recall that 
$L_A = \alpha^*(L) \otimes \pr_X^*(L)^{-1}$ and 
$\alpha \circ (e_A \times \id_X) = \pr_X \circ (e_A \times \id_X)$.
As 
$(\mu_A \times \id_X) \circ (e_A \times \id_{A \times X}) 
= \pr_2 \circ (e_A \times \id_{A \times X})$ and 
$(\pr_1 \times \id_X) \circ (e_A \times \id_{A \times X}) 
= e_A \times  \id_X$,
it follows that $L_{A \times A}$ is equipped with a rigidification
along $e_A \times A \times X$. Likewise, $L_{A \times A}$
is equipped with a rigidification along $A \times e_A \times X$. 
The assertion now follows from the lemma below, a version of the 
classical theorem of the cube (see \cite[III.10]{Mumford}).
\end{proof}

\begin{lemma}\label{lem:cube}
Let $X$, $Y$ be proper varieties equipped with $k$-rational points 
$x$, $y$. Let $Z$ be a geometrically connected scheme of finite type, 
and $L$ a line bundle on $X \times Y \times Z$. Assume that the 
pull-backs of $L$ to $x \times Y \times Z$ and $X \times y \times Z$ 
are trivial. Then $L \cong \pr_{X \times Y}^*(M)$ for some line bundle 
$M$ on $X \times Y$.
\end{lemma}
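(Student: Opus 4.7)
The plan is to reduce the statement to the classical theorem of the cube (\cite[III.10]{Mumford}) by constructing the candidate bundle $M$ via restriction to a fiber of $\pr_{X \times Y}$ and then showing that $L$ differs from its pullback by a bundle which is trivial on three ``coordinate faces''.

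More concretely, I would first pass to the algebraic closure $\bar k$. Since $Z$ is geometrically connected and nonempty, $Z_{\bar k}$ is connected and admits a $\bar k$-rational point $z_0$. Writing $p := \pr_{X \times Y} : X \times Y \times Z \to X \times Y$, I set
$$ M_{\bar k} := (\id_{(X \times Y)_{\bar k}} \times z_0)^*(L_{\bar k}), $$
a line bundle on $(X \times Y)_{\bar k}$, and form $L'_{\bar k} := L_{\bar k} \otimes p_{\bar k}^*(M_{\bar k})^{-1}$. By base change, $L_{\bar k}$ is trivial on $\{x\} \times Y_{\bar k} \times Z_{\bar k}$ and on $X_{\bar k} \times \{y\} \times Z_{\bar k}$. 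A direct computation shows that $L'_{\bar k}$ is trivial on the three subschemes $\{x\} \times Y_{\bar k} \times Z_{\bar k}$, $X_{\bar k} \times \{y\} \times Z_{\bar k}$, and $(X \times Y)_{\bar k} \times \{z_0\}$: the restriction of $p_{\bar k}^*(M_{\bar k})$ to the first face is the pullback of $M_{\bar k}|_{\{x\} \times Y_{\bar k}} = L_{\bar k}|_{\{x\} \times Y_{\bar k} \times \{z_0\}}$, which is trivial as a further restriction of the trivial bundle $L_{\bar k}|_{\{x\} \times Y_{\bar k} \times Z_{\bar k}}$; the second face is symmetric; and the third face is immediate from the definition of $M_{\bar k}$. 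Applying Mumford's theorem of the cube over $\bar k$, to the proper varieties $X_{\bar k}, Y_{\bar k}$ and the connected finite-type scheme $Z_{\bar k}$ with rational base points $(x, y, z_0)$, then forces $L'_{\bar k}$ to be trivial, so that $L_{\bar k} \cong p_{\bar k}^*(M_{\bar k})$.

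The main obstacle is descending this isomorphism from $\bar k$ back to $k$. The isomorphism class $[M_{\bar k}] \in \Pic((X \times Y)_{\bar k})$ is independent of the chosen $z_0$ (two choices give candidates that differ by a line bundle trivialized above on every slice, hence isomorphic) and is therefore invariant under $\Gal(\bar k / k)$. Since $X \times Y$ is proper, Galois descent for line bundles on proper schemes, combined with the rigidifications of $L$ provided by the given trivializations of $L|_{\{x\} \times Y \times Z}$ and $L|_{X \times \{y\} \times Z}$ (which kill any remaining Brauer-type obstruction), should produce an actual line bundle $M$ on $X \times Y$ with $L \cong \pr_{X \times Y}^*(M)$. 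A cleaner alternative is to avoid $\bar k$ altogether: take $z_0$ to be a closed point of $Z$ with residue field a finite extension $K/k$, run the construction over $K$, and then descend $M_K$ to $X \times Y$ using the norm (as in Lemma \ref{lem:nwr}), exploiting the fact that the candidate is canonical up to the rigidification.
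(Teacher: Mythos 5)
Your overall strategy --- slice $L$ along a fiber of $\pr_{X\times Y}$, twist, and invoke the classical theorem of the cube --- is reasonable and genuinely different from the paper's argument, but as written it has two real gaps. First, Mumford's theorem of the cube in \cite[III.10]{Mumford} is stated for \emph{complete varieties} $X$, $Y$ over an algebraically closed field, and a proper $k$-variety with a $k$-rational point need not be geometrically integral (e.g.\ $V(u^2-av^2)\subset\bP^2_k$ with $a$ a non-square contains $[0:0:1]$ but splits into two lines over $\bar k$); so $X_{\bar k}$ and $Y_{\bar k}$ may fail the hypotheses of the theorem you are quoting. Second, and more seriously, the descent step is not actually carried out, and both routes you sketch run into concrete obstructions. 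Galois descent governs the extension $k_s/k$, not $\bar k/k$; but the point $z_0$ may exist only over a purely inseparable extension of $k_s$ (take $Z=\Spec\,k(a^{1/p})$ over an imperfect field), so you can neither stay over $k_s$ nor literally apply Galois descent from $\bar k$. The norm alternative is worse: if $L_K\cong p_K^*(M_K)$ over a finite extension $K/k$, taking norms yields $L^{\otimes[K:k]}\cong p^*(\N(M_K))$, which proves the statement only for a power of $L$, not for $L$ itself. Finally, even granting a descent of the bundle $M$, you still must descend the \emph{isomorphism} $L\cong p^*(M)$ from $\bar k$ to $k$; since $X\times Y\times Z$ is not proper, the injectivity of $\Pic$ under base field extension is not automatic and needs the rigidifications again.

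The paper avoids all of this by never leaving $k$: it uses representability of $\Pic_{X\times Y}$ (Murre, Kleiman) to turn the rigidified $L$ into a morphism $\varphi:Z\to\Pic_{X\times Y}$ landing in the kernel $N$ of the restriction map to $\Pic_X\times\Pic_Y$, shows $N$ is \emph{\'etale} by computing $\Lie(\Pic_{X\times Y})\cong H^1(X,\cO_X)\oplus H^1(Y,\cO_Y)$ via K\"unneth, and concludes from the geometric connectedness of $Z$ that $\varphi$ is constant with value a $k$-rational point. If you want to salvage your approach, the cleanest fix is to graft it onto the same tool: after obtaining $L_{\bar k}\cong p_{\bar k}^*(M_{\bar k})$ (under the extra hypothesis that $X$, $Y$ are geometrically integral), observe that $\varphi_{\bar k}$ is then the constant morphism to $[M_{\bar k}]$, so the scheme-theoretic image of $\varphi$ is a $k$-form of a single reduced point, hence $\Spec(k)$, and one concludes as in the paper. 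But at that stage you have re-introduced the Picard scheme, which is exactly the input the paper's shorter argument starts from.
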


\begin{proof}
By our assumptions on $X$ and $Y$, we have $\cO(X) = k = \cO(Y)$.
Choose rigidifications 
\[ \cO_{Y \times Z} \stackrel{\cong}{\longrightarrow} 
(x \times \id_Y \times \id_Z)^*(L), \quad 
\cO_{X \times Z} \stackrel{\cong}{\longrightarrow} 
(\id_X \times y \times \id_Z)^*(L). \]
We may assume that these rigidifications induce the same isomorphism 
\[ \cO_Z \stackrel{\cong}{\longrightarrow} 
(x \times y \times \id_Z)^*(L),
\] 
since their pull-backs to $Z$ differ by a unit in 
$\cO(Z) = \cO(Y \times Z) = \cO(X \times Z)$.

By \cite[II.15]{Murre} together with \cite[Thm.~2.5]{Kleiman}, 
the Picard functor $\Pic_X$ is represented by a commutative group 
scheme, locally of finite type, and likewise for 
$\Pic_Y$, $\Pic_{X \times Y}$. Also, we may view
$\Pic_{X \times Y}(Z)$ as the group of isomorphism classes of 
line bundles on $X \times Y \times Z$, rigidified along 
$x \times y \times Z$, and likewise for $\Pic_X(Z)$, $\Pic_Y(Z)$
(see e.g. \cite[Lem.~2.9]{Kleiman}).  
Thus, $L$ defines a morphism of schemes
\[ \varphi : Z \longrightarrow \Pic_{X \times Y}, \quad
z \longmapsto (\id_{X \times Y} \times z)^*(L). \]

Denote by $N$ the kernel of the morphism of group schemes
\[ \pr_X^* \times \pr_Y^* : \Pic_{X \times Y} 
\longrightarrow \Pic_X \times \Pic_Y. \]
Then $\varphi$ factors through $N$ in view of the rigidifications
of $L$. We now claim that $N$ is \'etale. To check this,
it suffices to show that the differential of 
$\pr_X^* \times \pr_Y^*$ at the origin is an isomorphism. 
But we have
\[ \Lie(\Pic_{X \times Y}) \cong H^1(X \times Y, \cO_{X \times Y})
\cong (H^1(X, \cO_X) \otimes \cO(Y)) \oplus 
(\cO(X) \otimes H^1(Y,\cO_Y)). \]
where the first isomorphism follows from \cite[Thm.~5.11]{Kleiman}, 
and the second one from the K\"unneth formula. Thus,
\[ \Lie(\Pic_{X \times Y}) \cong H^1(X, \cO_X) \oplus H^1(Y,\cO_Y) 
\cong \Lie(\Pic_X) \oplus \Lie(\Pic_Y). \]
Moreover, these isomorphisms identify the differential of 
$\pr_X^* \times \pr_Y^*$ at the origin with the identity. 
This implies the claim.

Since $Z$ is geometrically connected, it follows from the claim 
that $\varphi$ factors through a $k$-rational point of $N$. 
By the definition of the Picard functor, this means that 
\[ L \cong \pr_{X \times Y}^*(M) \otimes \pr_Z^*(M') \]
for some line bundles $M$ on $X \times Y$ and $M'$ on $Z$.
Using again the rigidifications of $L$, we see that
$M'$ is trivial. 
\end{proof}

\subsection{Proof of Theorem \ref{thm:cover}}
\label{subsec:cover}

Let $X$ be a normal $G$-variety, where $G$ is a connected
algebraic group. We first reduce to the case where $G$ is
\emph{smooth}; for this, we may assume that $\car(k) > 0$.
By Lemmas \ref{lem:quot} and \ref{lem:equiv},
there is a finite $G$-equivariant morphism
$\varphi : X \to X/G_n$ for all $n \geq 1$, where $X/G_n$
is a normal $G/G_n$-variety. Since $G/G_n$ is smooth
for $n \gg 0$, this yields the desired reduction.


Consider an open affine subvariety $U$ of $X$. Then the 
image $G \cdot U = \alpha(G \times U)$ is open in $X$ (since
$\alpha$ is flat), and $G$-stable. Clearly, $X$ is covered by 
opens of the form $G \cdot U$ for $U$ as above; thus, it suffices 
to show that $G \cdot U$ is quasi-projective. This follows from 
the next proposition, a variant of a result of Raynaud on
the quasi-projectivity of torsors (see \cite[V.3.10]{Raynaud} 
and also \cite[6.4 Prop.~2]{BLR}).

\begin{proposition}\label{prop:ray}
Let $G$ be a smooth connected algebraic group, $X$ a normal $G$-variety,
and $U \subset X$ an open affine subvariety. Assume that $X = G \cdot U$ 
and let $D$ be an effective Weil divisor on $X$ with support
$X \setminus U$. Then $D$ is an ample Cartier divisor.
\end{proposition}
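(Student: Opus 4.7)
The plan is to use the theorem of the square (Proposition \ref{prop:ts}) together with the hypothesis $X = G \cdot U$ to translate the canonical section $s_D$ of $\cO_X(D)$ and produce, for every point of $X$, local trivializations of suitable powers of $\cO_X(D)$. This will simultaneously yield that $D$ is Cartier and that $\cO_X(D)$ is ample.

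First I would reduce to $X$ geometrically integral by passing, via Example \ref{ex:fp} (iv), to the separable algebraic closure $K$ of $k$ in $k(X)$; both Cartierness and ampleness descend appropriately. Set $L := \cO_X(D)$, a rank-one reflexive sheaf with canonical section $s_D$ vanishing along $D$. On the smooth locus $X^{\reg}$ (which is $G$-stable, open dense, with complement of codimension $\ge 2$), $L$ is a genuine line bundle. Applying Proposition \ref{prop:ts} to $L|_{X^{\reg}}$, and replacing $D$ by a positive multiple depending only on $G$, we may assume $L|_{X^{\reg}}$ satisfies the theorem of the square. Since $X$ is normal, rank-one reflexive sheaves on $X$ are recovered from their restriction to $X^{\reg}$ by pushforward, so the conclusion propagates: for any $g_1,\ldots,g_n \in G(\bar k)$ with $g_1 \cdots g_n = e$ there is a canonical isomorphism of rank-one reflexive sheaves on $X$,
\[
\phi_{g_1}^{*} L \otimes \cdots \otimes \phi_{g_n}^{*} L \;\cong\; L^{\otimes n},
\]
where $\phi_g : X \to X$ is the action of $g$ and tensor products are taken in the reflexive category. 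Moreover, each $\phi_g^{*} L = \cO_X(\phi_g^{*} D)$ is trivialized on the affine open $g^{-1} \cdot U$ (complement of the support of $\phi_g^{*} D$) by the translate $\phi_g^{*} s_D$.

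To show $D$ is Cartier, fix $x \in X$ and set $W_x := \{g \in G_{\bar k} : g \cdot x \in U\}$, a nonempty open subset of $G_{\bar k}$ by the hypothesis $X = G \cdot U$. For $g \in W_x$, the pulled-back sheaf $\phi_g^{*} L$ is trivialized near $x$. Using that $G_{\bar k}$ is irreducible (since $G$ is smooth and connected) and that the multiplication map $G \times G \to G$ is open, I would produce two configurations: (a) choose $g \in W_x$ with $g^{-1} \in W_x$ as well (exists since $W_x \cap W_x^{-1}$ is a nonempty open), giving $L^{\otimes 2}$ trivialized on $(g^{-1} \cdot U) \cap (g \cdot U) \ni x$; and (b) choose $g_1, g_2 \in W_x$ with $g_1 g_2 \in W_x^{-1}$ (again a nonempty open condition on $W_x \times W_x$) and set $g_3 := (g_1 g_2)^{-1} \in W_x$, so that $g_1 g_2 g_3 = e$ and $L^{\otimes 3}$ is trivialized near $x$. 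As $x$ was arbitrary, $L^{\otimes 2}$ and $L^{\otimes 3}$ are line bundles on $X$; hence $L \cong L^{\otimes 3} \otimes (L^{\otimes 2})^{-1}$ is a line bundle, so $D$ is Cartier.

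For ampleness, configuration (a) supplies the global section $s := \phi_g^{*} s_D \otimes \phi_{g^{-1}}^{*} s_D \in \Gamma(X, L^{\otimes 2})$ with $s(x) \neq 0$ and nonvanishing locus $(g^{-1} \cdot U) \cap (g \cdot U)$, an intersection of two affine opens in the separated scheme $X$, hence itself affine. The standard ampleness criterion then shows $L^{\otimes 2}$, and therefore $L$, is ample. The main technical obstacle is applying the theorem of the square to what is a priori only a rank-one reflexive sheaf; this is circumvented by working on $X^{\reg}$ and using the normality of $X$ via codimension-two extension of reflexive sheaves. A secondary point is the combinatorial construction of the triple $(g_1, g_2, g_3)$ in (b), which relies crucially on the smoothness and connectedness of $G$ through the openness of multiplication.
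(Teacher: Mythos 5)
Your overall strategy for ampleness --- theorem of the square, translates of $U$, and a covering of $X$ by affine non-vanishing loci of sections of $\cO_X(2D)$ --- is essentially the paper's, and your route to Cartierness (showing $2D$ and $3D$ are invertible near each point and taking the difference) is a genuinely different alternative to the paper's, which instead applies the Ramanujam--Samuel theorem to the divisor $G \times D$ on $G \times X$ and pulls back along $e_G \times \id_X$; that argument needs no reflexive-sheaf formalism and no rational points at all. However, your proof has a genuine gap in the choice of the translating elements. You take $g, g_1, g_2, g_3$ in nonempty open subsets of $G_{\bar k}$, i.e.\ in $G(\bar k)$. Over $\bar k$ the scheme $X_{\bar k}$ need not be normal when $k$ is imperfect (normality is preserved only under \emph{separable} field extensions), so the Weil-divisor and reflexive-sheaf formalism you rely on is not available there, and one cannot identify $\cO_X(2D)\otimes_k \bar k$ with the reflexive hull computed on $X_{\bar k}$; consequently neither the local invertibility nor the global sections descend to $X$. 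The correct reduction is to $k$ separably closed. There the theorem of the square yields $\cO_X(2D)\cong\cO_X(g\cdot D+g^{-1}\cdot D)$ only for $g\in G(k)$, and producing such a $g$ with $x\in g\cdot U\cap g^{-1}\cdot U$ is not immediate: a closed point $x$ may have residue field $k'$ purely inseparable over $k$, so your open set $W_x\cap W_x^{-1}$ lives in $G_{k'}$ and has no a priori reason to contain a $k$-point. The paper fills this in by pushing the complement down along the finite surjective map $\pr_G : G_{k'}\to G$ to obtain a dense open $W\subset G$ with $W_{k'}\subset W_x\cap W_x^{-1}$, and then using that $G(k)$ is dense in the smooth group $G$ over the separably closed field $k$. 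This step is entirely missing from your argument.

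A smaller slip: over an imperfect field the complement of the \emph{smooth} locus of a normal variety can have codimension one (e.g.\ a regular, non-smooth curve), so $X^{\reg}$ must be taken to be the regular locus; it is $G$-stable because the action map is smooth, but that deserves a word. Note finally that once Cartierness is established first (as in the paper, via Ramanujam--Samuel), Proposition \ref{prop:ts} applies directly to the line bundle $\cO_X(D)$ on $X$, and the entire detour through $X^{\reg}$ and reflexive hulls becomes unnecessary.
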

   
\begin{proof}
By our assumptions on $G$, the action map 
$\alpha : G \times X \to X$ is smooth and its fibers are geometrically
irreducible; in particular, $G \times X$ is normal. Also, 
the Weil divisor $G \times D$ on $G \times X$ contains no fiber 
of $\alpha$ in its support, since $X = G \cdot U$. In view of the
Ramanujam-Samuel theorem (see \cite[IV.21.14.1]{EGA}), it follows 
that $G \times D$ is a Cartier divisor. As $D$ is the pull-back 
of $G \times D$ under $e_G \times \id_X$, we see that $D$ is Cartier.

To show that $D$ is ample, we may replace $k$ with any separable 
field extension, since normality is preserved under such extensions.
Thus, we may assume that $k$ is \emph{separably closed}.
By Proposition \ref{prop:ts}, there exists a positive integer $m$ 
such that the line bundle on $X$ associated with $mD$ satisfies 
the theorem of the square. Replacing $D$ with $mD$, we see that 
the divisor $gh \cdot D  - g \cdot D - h \cdot D + D$ is principal 
for all $g, h \in G(k)$. In particular, we have isomorphisms
\[ \cO_X(2D) \cong \cO_X(g \cdot D + g^{-1} \cdot D) \]
for all $g \in G(k)$.

We now adapt an argument from \cite[p.~154]{BLR}. In view of the
above isomorphism, we have global sections $s_g \in H^0(X,\cO_X(2D))$ 
($g \in G(k))$ such that $X_{s_g} = g \cdot U \cap g^{-1} \cdot U$ 
is affine. Thus, it suffices to show that $X$ is covered by the 
$g \cdot U \cap g^{-1} \cdot U$, where $g \in G(k)$. In turn, it suffices 
to check that every closed point $x \in X$ lies in 
$g \cdot U \cap g^{-1} \cdot U$ for some $g \in G(k)$.

Denote by $k'$ the residue field of $x$; this is a finite
extension of $k$. Consider the orbit map
\[ \alpha_x : G_{k'} \longrightarrow X_{k'}, \quad
g \longmapsto g \cdot x. \]
Then $V := \alpha_x^{-1}(U_{k'})$ is open in $G_{k'}$, and
non-empty as $X = G \cdot U$. Since $G$ is geometrically irreducible,
$V \cap V^{-1}$ is open and dense in $G_{k'}$. As 
$\pr_G : G_{k'} \to G$ is finite and surjective, there exists 
a dense open subvariety $W$ of $G$ such that 
$W_{k'} \subset V \cap V^{-1}$. Also, since $G$ is smooth, $G(k)$
is dense in $G$, and hence $W(k)$ is non-empty. Moreover,
$x \in g \cdot U \cap g^{-1} \cdot U$ for any $g \in G(k)$.
\end{proof}

\subsection{Proof of Theorem \ref{thm:model}}
\label{subsec:model}

It suffices to show that $X$ is $G$-equivariantly isomorphic to 
$G \times^H Y$ for some subgroup scheme $H\subset G$ such
that $G/H$ is an abelian variety, and some $H$-quasi-projective
closed subscheme $Y \subseteq X$. Indeed, we may then view 
$Y$ as a $H$-stable subscheme of the projectivization $ \bP(V)$
of some finite-dimensional $H$-module $V$. Hence $X$ is 
a $G$-stable subscheme of the projectivization $\bP(E)$,
where $E$ denotes the homogeneous vector bundle 
$G \times^H V \to G/H$.

Next, we reduce to the case where $G$ is \emph{smooth},
as in the proof of Theorem \ref{thm:cover}.
We may of course assume that $\car(k) > 0$.
Choose a positive integer $n$ such that $G/G_n$ is smooth
and recall from Lemmas \ref{lem:quot} and \ref{lem:equiv} that 
$X/G_n =: X'$  is a normal quasi-projective variety equipped 
with an action of $G/G_n =: G'$ such that the quotient morphism 
$\varphi : X \to X'$ is equivariant. Assume that there exists an
equivariant isomorphism $X' \cong G' \times^{H'} Y'$ satisfying
the above conditions. Let $H \subset G$ (resp. $Y \subset X$) 
be the subgroup scheme (resp. the closed subscheme)
obtained by pulling back $H' \subset G'$ (resp. $Y' \subset X'$). 
Then $G/H \cong G'/H'$, and hence the composition
$X \to X' \to G'/H'$ is a $G$-equivariant morphism with fiber
$Y$ at the base point. This yields a $G$-equivariant isomorphism
$X \cong  G \times^H Y$, where $G/H$ is an abelian variety.
Moreover, $Y$ is $H$-quasi-projective, since it is equipped with
a finite $H$-equivariant morphism to $Y'$, and the latter is 
$H$-quasi-projective. This yields the desired reduction.

Replacing $G$ with its quotient by the kernel of the action, we may
further assume that \emph{$G$ acts faithfully on $X$}. 
We now use the notation of the proof of Proposition \ref{prop:ts}; 
in particular, we choose a normal connected affine subgroup scheme
$H \triangleleft G$ such that $G/H$ is an abelian variety, and an ample 
$H$-linearized line bundle $L$ on $X$. Recall that the line bundle 
$L_G = \alpha^*(L) \otimes \pr_X^*(L^{-1})$
satisfies $L_G = (f \times \id_X)^*(L_A)$ for a line bundle $L_A$ on 
$A \times X$, rigidified along $e_A \times X$. Since the Picard 
functor $\Pic_A$ is representable, this yields a morphism of schemes
\[ \varphi : X \longrightarrow \Pic_A, \quad
x \longmapsto (\id_A \times x)^*(L_A). \]

We first show that \emph{$\varphi$ is $G$-equivariant} relative
to the given $G$-action on $X$, and the $G$-action on $\Pic_A$ via
the morphism $f: G \to A$ and the $A$-action on $\Pic_A$ by
translation. Since $G \times X$ is reduced (as $G$ is smooth and 
$X$ is reduced), it suffices to check the equivariance on points
with values in fields. So let $k'/k$ be a field extension, and
$g \in G(k')$, $x \in X(k')$. Then 
$\varphi(x) \in \Pic_A(k') = \Pic(A_{k'})$. 
Moreover, by \cite[p.~32]{MFK}, the pull-back map $f_{k'}^*$ 
identifies $\Pic(A_{k'})$ with 
the group $\Pic^{H_{k'}}(G_{k'})$ of $H_{k'}$-linearized
line bundles on $G_{k'}$; also,
$f_{k'}^* \varphi_{k'}(x) = (\id_{G_{k'}} \times x)^*(L_{G_{k'}})$
in $\Pic^{H_{k'}}(G_{k'})$. Thus, 
\[ f_{k'}^* \varphi_{k'}(x) = \alpha_x^*(L_{k'}), \] 
where $\alpha_x : G_{k'} \to X_{k'}$ denotes the orbit map. 
We have $\alpha_{g \cdot x} = \alpha_x \circ \rho(g)$,
where $\rho(g)$ denotes the right multiplication by $g$ in 
$G_{k'}$. Hence
\[ f_{k'}^* \varphi_{k'}(g \cdot x) = \rho(g)^* f_{k'}^* \varphi_{k'}(x). \]
Also, since $f$ is $G$-equivariant, we have 
$f_{k'} \circ \rho(g) = \tau(f_{k'}(g)) \circ f_{k'}$,
where $\tau(a)$ denotes the translation by $a \in A(k')$ in the abelian
variety $A_{k'}$. This yields the equality 
\[ f_{k'}^* \varphi_{k'}(g \cdot x) = 
f_{k'}^* \tau(f_{k'}(g))^* \varphi_{k'}(x) \]
in $\Pic^{H_{k'}}(G_{k'})$, and hence the desired equality
\[ \varphi_{k'}(g \cdot x) = \tau(f_{k'}(g))^* \varphi_{k'}(x) \]
in $\Pic(A_{k'})$. 

Next, we show that \emph{$\varphi(x)$ is ample for any $x \in X$}.
By \cite[XI.1.11.1]{Raynaud}, it suffices to show that the line
bundle $f^*\varphi(x)$ on $G_{k'}$ is ample, where $k'$ is as above;
equivalently, $\alpha_x^*(L)$ is ample. The orbit map $\alpha_x$
(viewed as a morphism from $G$ to the orbit of $x$) 
may be identified with the quotient map by the isotropy subgroup
scheme $G_{k',x} \subset G_{k'}$. This subgroup scheme is affine (see 
e.g. \cite[Prop.~3.1.6]{Brion17}) and hence so is the morphism 
$\alpha_x$. As $L$ is ample, this yields the assertion.

Now recall the exact sequence of group schemes
\[ 0 \longrightarrow \hat{A} \longrightarrow \Pic_A
\longrightarrow \NS_A \longrightarrow 0, \]
where $\hat{A} = \Pic^0_A$ denotes the dual abelian variety,
and $\NS_A = \pi_0(\Pic_A)$ the N\'eron-Severi group scheme; 
moreover, $\NS_A$ is \'etale. 

\emph{If $X$ is geometrically irreducible}, it follows that the base 
change $\varphi_{k_s} : X_{k_s} \to \Pic_{A_{k_s}}$ factors 
through a unique coset $Y = \hat{A}_{k_s} \cdot M$, where 
$M$ is an ample line bundle on $A_{k_s}$. We then have an 
$A_{k_s}$-equivariant isomorphism $Y \cong A_{k_s}/K(M)$, where
$K(M)$ is a finite subgroup scheme of $A_{k_s}$. So there exists
a finite Galois extension $k'/k$ and a $G_{k'}$-equivariant
morphism of $k'$-schemes $\varphi': X_{k'} \to A_{k'}/F$,
where $F$ is a finite subgroup scheme of $A_{k'}$. As $F$ is
contained in the $n$-torsion subgroup scheme $A_{k'}[n]$
for some positive integer $n$, and 
$A_{k'}/A_{k'}[n] \stackrel{\cong}{\longrightarrow} A_{k'}$
via the multiplication by $n$ in $A_{k'}$, we obtain a
morphism of $k'$-schemes $\varphi'': X_{k'} \to A_{k'}$
which satisfies the equivariance property 
\[ \varphi''(g \cdot x) = \tau(n f(g)) \cdot \varphi''(x) \]
for all schematic points $g \in G_{k'}$, $x \in X_{k'}$.

The Galois group $\Gamma_{k'} := \Gal(k'/k)$ acts on 
$G_{k'}$ and $A_{k'}$; replacing $\varphi''$ with the sum
of its $\Gamma_{k'}$-conjugates (and $n$ with $n [k':k]$),
we may assume that $\varphi''$ is $\Gamma_{k'}$-equivariant.
Thus, $\varphi''$ descends to a morphism 
$\psi :X \to A$ such that 
$\psi(g \cdot x) = \tau(n f(g)) \cdot \psi(x)$
for all schematic points $g \in G$, $x \in X$. We may view 
$\psi$ as a $G$-equivariant morphism to $A/A[n]$, or equivalently, 
to $G/H'$, where $H' \subset G$ denotes the pull-back
of $A[n] \subset A$ under $f$. Since $H'/H$ is finite, we see that
$G/H'$ is an abelian variety and $H'$ is affine. Moreover, $\psi$
yields a $G$-equivariant isomorphism $X \cong G\times^{H'} Y$
for some closed $H'$-stable subscheme $Y \subset X$.
By Corollary \ref{cor:GH}, $X$ is $H'$-quasi-projective; hence so
is $Y$. This completes the proof in this case.

Finally, we consider the \emph{general case}, where $X$ is not 
necessarily geometrically irreducible. By Example \ref{ex:fp} (iv), 
we may view $X$ as a geometrically irreducible $K$-variety,
where $K$ denotes the separable algebraic closure of $k$ in
$k(X)$. Moreover, $G_K$ acts faithfully on $X$ via $\pr_G : G_K \to G$. 
Also, $X$ is quasi-projective over $K$ in view of 
\cite[II.6.6.5]{EGA}. So the preceding step yields a $G_K$-equivariant
morphism $X \to G_K/H'$ for some normal affine $K$-subgroup scheme
$H' \triangleleft G_K$ such that $A' = G_K/H'$ is an abelian 
variety. On the other hand, we have an exact sequence of $K$-group
schemes
\[ 1 \longrightarrow H_K \longrightarrow G_K 
\stackrel{f_K}{\longrightarrow} A_K  \longrightarrow 1, \]
where $H_K$ is affine and $A_K$ is an abelian variety. Consider
the subgroup scheme $H_K \cdot H' \subset G_K$ generated by $H_K$
and $H'$. Then $H_K \cdot H'/H' \cong H_K/H_K \cap H'$ is
affine (as a quotient group of $H_K$) and proper (as a subgroup
scheme of $G_K/H_K = A_K$), hence finite. Thus, $H_K \cdot H'$
is affine, and the natural map $G_K/H' \to G_K/H_K \cdot H'$ is 
an isogeny of abelian varieties. Replacing $H'$ with $H_K \cdot H'$, 
we may therefore assume that $H_K \subset H'$. Then the finite subgroup
scheme $H'/H_K \subset A_K$ is contained in $A_K[n]$ for some
positive integer $n$. This yields a $G_K$-equivariant morphism
$X \to A_K/A_K[n]$, and hence a $G$-equivariant morphism
$X \to A/A[n]$ by composing with 
$\pr_{A/A[n]}: A_K/A_K[n] \to A/A[n]$. Arguing as at the end of the
preceding step completes the proof. 

\begin{remarks}\label{rem:fin}
(i) Consider a smooth connected algebraic group $G$
and an affine subgroup scheme $H$ such that $G/H$ is an
abelian variety. Then the quotient map $G \to G/H$ is a
morphism of algebraic groups (see e.g. \cite[Prop.~4.1.4]{Brion17}).
In particular, $H$ is normalized by $G$. But in positive characteristics,
this does not extend to an arbitrary connected algebraic group $G$. 
Consider indeed a non-trivial abelian variety $A$; then we
may choose a non-trivial infinitesimal subgroup $H \subset A$, and 
form the semi-direct product $G := H \ltimes A$, where $H$ acts on 
$A$ by translation. So $H$ is identified with a non-normal subgroup
of $G$ such that the quotient $G/H = A$ is an abelian variety.

Also, recall that a smooth connected algebraic group $G$
admits a smallest (normal) subgroup scheme with quotient
an abelian variety. This also fails for non-smooth 
algebraic groups, in view of \cite[Ex.~4.3.8]{Brion17}.

\medskip

\noindent
(ii) With the notation and assumptions of Theorem \ref{thm:model},
we have seen that $X$ is an associated fiber bundle $G \times^H Y$ 
for some subgroup scheme $H \subset G$ such that $G/H$ is
an abelian variety, and some closed $H$-quasi-projective subscheme
$Y \subset X$. If $G$ acts almost faithfully on $X$, then the
$H$-action on $Y$ is almost faithful as well; thus, $H$ is affine.

Note that the pair $(H,Y)$ is not uniquely determined by $(G,X)$,
since $H$ may be replaced with any subgroup scheme $H' \subset G$
such that $H' \supset H$ and $H'/H$ is finite. So one may rather
ask whether there exists such a pair $(H,Y)$ with a smallest
subgroup scheme $H$, i.e., the corresponding morphism
$\psi : X \to G/H$ is universal among all such morphisms. 
The answer to this question is generally negative (see 
\cite[Ex.~5.1]{Brion10}); yet one can show that it is positive
in the case where $G$ is smooth and $X$ is almost homogeneous 
under $G$. 

Even under these additional assumptions, there may exist no pair 
$(H,Y)$ with $H$ smooth or $Y$ geometrically reduced. Indeed, assume 
that $k$ is imperfect; then as shown by Totaro (see \cite{Totaro}), 
there exist non-trivial \emph{pseudo-abelian varieties}, i.e., 
smooth connected non-proper algebraic groups such that every
smooth connected normal affine subgroup is trivial. Moreover,
every pseudo-abelian variety $G$ is commutative. Consider the
$G$-action on itself by multiplication; then the above
associated fiber bundles are exactly the bundles of the form 
$G \times^H H$, where $H \subset G$ is an affine subgroup scheme 
(acting on itself by multiplication) such that $G/H$ is an abelian 
variety. There exists a smallest such subgroup scheme (see 
\cite[9.2 Thm.~1]{BLR}), but no smooth one. 
For a similar example with a projective variety, just 
replace $G$ with a normal projective equivariant completion
(which exists in view of \cite[Thm.~5.2.2]{Brion17}).

To obtain an explicit example, we recall a construction
of pseudo-abelian varieties from \cite[Sec.~6]{Totaro}.
Let $k$ be an imperfect field of characteristic $p$, and
$U$ a smooth connected unipotent group of exponent $p$.
Then there exists an exact sequence of commutative algebraic groups
\[ 0 \longrightarrow \alpha_p \longrightarrow H 
\longrightarrow U \longrightarrow 0, \]
where $H$ contains no non-trivial smooth connected subgroup scheme.
Next, let $A$ be an elliptic curve which is supersingular,
i.e., its Frobenius kernel is $\alpha_p$. Then 
$G := A \times^{\alpha_p} H$ 
is a pseudo-abelian variety, and lies in two exact sequences
\[ 0 \longrightarrow A \longrightarrow G
\longrightarrow U \longrightarrow 0,
\quad  
0 \longrightarrow H \longrightarrow G
\longrightarrow A^{(p)} \longrightarrow 0, \]
since $H/\alpha_p \cong U$ and $A/\alpha_p \cong A^{(p)}$. 

We claim that $H$ is the smallest subgroup scheme $H' \subset G$
such that $G/H'$ is an abelian variety. Indeed, $H' \subset H$ 
and $H'/H$ is finite, hence $\dim(H') = \dim(H) = \dim(U)$. If 
$H' \cap \alpha_p$ is trivial, then the natural map $H' \to U$ 
is an isomorphism. Thus, $H'$ is smooth, a contradiction. 
Hence $H' \supset \alpha_p$, so that the natural map
$H'/\alpha_p \to U$ is an isomorphism; we conclude that $H' = H$. 

In particular, taking for $U$ a $k$-form of the additive group,
we obtain a pseudo-abelian surface $G$. One may easily check 
that $G$ admits a unique normal equivariant completion $X$; moreover, 
the surface $X$ is projective, regular and geometrically integral, 
its boundary $X \setminus G$ is a geometrically irreducible curve, 
homogeneous under the action of $A \subset G$, and 
$X \cong G \times^H Y$, where $Y$ (the schematic closure 
of $H$ in $X$) is not geometrically reduced. Also, the projection 
\[ \psi : X \longrightarrow G/H = A^{(p)} \] 
is the Albanese morphism of $X$, and satisfies 
$\psi_*(\cO_X) = \cO_{A^{(p)}}$. 
\end{remarks}

\subsection{Proof of Corollary \ref{cor:alb}}
\label{subsec:cor}

Recall from \cite{Wittenberg} that there exists an abelian variety
$\Alb^0(X)$,  a torsor $\Alb^1(X)$ under $\Alb^0(X)$, and a morphism 
\[ a_X : X \longrightarrow \Alb^1(X) \] 
satisfying the following universal property: for any morphism 
$f : X \to A^1$, where $A^1$ is a torsor under an abelian variety $A^0$, 
there exists a unique morphism $f^1 : \Alb^1(X) \to A^1$ 
such that $f = f^1 \circ a_X$, and a unique morphism of abelian 
varieties $f^0 : \Alb^0(X) \to A^0$ such that $f^1$ is equivariant 
relative to $f^0$. We then say that $a_X$ is the 
\emph{Albanese morphism}; of course, $\Alb^1(X)$ will be the Albanese 
torsor, and $\Alb^0(X)$ the Albanese variety.

When $X$ is equipped with a $k$-rational point $x$, we may identify 
$\Alb^1(X)$ with $\Alb^0(X)$ by using the $k$-rational point $a_X(x)$
as a base point. This identifies $a_X$ with the universal morphism from 
the pointed variety $(X,x)$ to an abelian variety, which sends $x$ to 
the neutral element.

By the construction in \cite[App.~A]{Wittenberg} via Galois descent, 
the formation of the Albanese morphism commutes with separable
algebraic field extensions. Also, the formation of this morphism
commutes with finite products of pointed, geometrically integral 
varieties (see e.g.~\cite[Cor.~4.1.7]{Brion17}). Using Galois descent 
again, it follows that the formation of the Albanese morphism commutes 
with finite products of arbitrary geometrically integral varieties. 
In view of the functorial considerations of Subsection \ref{subsec:fp}, 
for any such variety $X$ equipped with an action $\alpha$ of 
a smooth connected algebraic group $G$, we obtain a morphism 
of abelian varieties 
\[ \Alb^0(\alpha) : \Alb^0(G) \longrightarrow \Alb^0(X) \] 
such that $a_X$ is equivariant relative to the morphism of algebraic groups
\[ \Alb^0(\alpha) \circ a_G : G \longrightarrow \Alb^0(X). \] 
Also, by  Remark \ref{rem:fin} (i) and \cite[Thm.~4.3.4]{Brion17}, 
the Albanese morphism $a_G : G \to \Alb^0(G)$ can be identified 
with the quotient morphism by the smallest affine subgroup scheme 
$H \subset G$ such that $G/H$ is an abelian variety.

Assume in addition that $X$ is normal and quasi-projective, and $\alpha$
is almost faithful. Then, as proved in Subsection \ref{subsec:model}, 
there exists a $G$-equivariant morphism $\psi : X \to G/H'$, where 
$H' \subset G$ is an affine subgroup scheme such that $G/H'$ 
is an abelian variety; in particular, $H' \supset H$ and $H'/H$ is finite. 
This yields an $\Alb^0(G)$-equivariant morphism of abelian varieties
\[ \psi^0 : \Alb^0(X) \longrightarrow G/H', \]
where $\Alb^0(G) = G/H$ acts on $G/H'$ via the quotient morphism
$G/H \to G/H'$. Since the latter action is almost faithful, so is the action 
of $\Alb^0(G)$ on $\Alb^0(X)$, or equivalently on $\Alb^1(X)$. 

\begin{remark}\label{rem:pr3}
Keep the notation and assumptions of Corollary \ref{cor:bir}, and
assume in addition that $\alpha$ is faithful. Then the kernel
of the induced action $\Alb^1(\alpha)$ (or equivalently, of
$\Alb^0(\alpha)$) can be arbitrarily large, as shown by the following
example from classical projective geometry.

Let $C$ be a smooth projective curve of genus $1$; then $C$ is a torsor
under an elliptic curve $G$. Let $n$ be a positive integer and
consider the $n$th symmetric product $X := C^{(n)}$. This is a smooth 
projective variety of dimension $n$, equipped with a faithful
action of $G$. We may view $X$ as the scheme of effective Cartier
divisors of degree $n$ on $C$; this defines a morphism
\[ f : X \longrightarrow \Pic^n(C), \]
where $\Pic^n(C)$ denotes the Picard scheme of line bundles of
degree $n$ on $C$. The elliptic curve $G$ also acts on $\Pic^n(C)$,
and $f$ is equivariant; moreover, the latter action is transitive 
(over $\bar{k})$ and its kernel is the $n$-torsion subgroup scheme 
$G[n] \subset G$, of order $n^2$. Thus, we may view $\Pic^n(C)$ 
as a torsor under $G/G[n]$. Also, $f$ is a projective bundle, with fiber 
at a line bundle $L$ over a field extension $k'/k$ being the projective 
space $\bP H^0(C_{k'},L)$. It follows that $f$ is the Albanese morphism 
$a_X$. In particular, $\Alb^0(X) \cong G/G[n]$.
\end{remark}

\medskip

\noindent
{\bf Acknowledgements}. Many thanks to St\'ephane Druel and 
Philippe Gille for very helpful discussions on an earlier version of 
this paper, and to Bruno Laurent and an anonymous referee
for valuable comments on the present version. 
Special thanks are due to Olivier Benoist for pointing out that 
Theorem \ref{thm:cover} follows from his prior work \cite{Benoist}, 
and for an important improvement in an earlier version of 
Proposition \ref{prop:ray}.

\end{document}